\numberwithin{equation}{section} \numberwithin{figure}{section}
\DeclareMathOperator{\Pic}{Pic} 
\DeclareMathOperator{\Gal}{Gal} \DeclareMathOperator{\NS}{NS}
 \DeclareMathOperator{\Spec}{Spec}
 \DeclareMathOperator{\rank}{rank}
\DeclareMathOperator{\Hom}{Hom} \DeclareMathOperator{\re}{Re}
\DeclareMathOperator{\vol}{vol} 
 \DeclareMathOperator{\Val}{Val}
\DeclareMathOperator{\Eff}{Eff} \DeclareMathOperator{\Res}{R_{F/E}}
\newcommand{\Fbar}{\overline{F}}
\newcommand{\Ebar}{\overline{E}}
\newcommand{\Xbar}{\overline{X}}
\newcommand{\Lbar}{\overline{L}}
\newcommand{\LL}{\mathcal{L}}
\newcommand{\Norm}[2]{\operatorname{N}_{{{\mathrm{#1}}/{\mathrm{#2}}}}}
\newcommand{\RRes}[2]{\operatorname{R}_{{{\mathrm{#1}}/{\mathrm{#2}}}}}
\newcommand{\Adele}{\mathbf{A}}
\newcommand{\pp}{\mathfrak{p}}
\newcommand{\aaa}{\mathfrak{a}}
\newcommand{\cc}{\mathfrak{c}}
\newcommand\PP{\mathbb{P}}
\newcommand\ZZ{\mathbb{Z}}
\newcommand\NN{\mathbb{N}}
\newcommand\QQ{\mathbb{Q}}
\newcommand\RR{\mathbb{R}}
\newcommand\CC{\mathbb{C}}
\newcommand\GG{\mathbb{G}}
\newcommand\Gm{\GG_\mathrm{m}}
\newcommand{\Mob}{M\"{o}bius }
\newtheorem{lemma}{Lemma}
\newtheorem{theorem}[lemma]{Theorem}
\theoremstyle{definition}
\newtheorem{example}[lemma]{Example}
\newtheorem{definition}[lemma]{Definition}
\numberwithin{lemma}{section}
\title[Rational points and Weil restriction]{Rational points of bounded height and the Weil restriction}
\author[Daniel Loughran]{Daniel Loughran}
\address{
Leibniz Universit\"{a}t Hannover \\
Institut f\"{u}r Algebra, Zahlentheorie
    und Diskrete Mathematik, \\
Welfengarten 1 \\
30167 Hannover \\
Germany.}
\email{loughran@math.uni-hannover.de}
\subjclass[2010]
{11D45 (primary)
, 11G35, 
14G05,  
14J45 
(secondary)}
\begin{document}

\begin{abstract}
    Given an extension of number fields $E \subset F$ and a projective variety $X$ over $F$,
    we compare the problem of counting the number of rational points of bounded height on $X$ with that
    of its Weil restriction over $E$. In particular, we consider the compatibility with respect to the Weil restriction of conjectural asymptotic formulae
	due to Manin and others. Using our methods we prove several new cases of these conjectures. We also construct new counterexamples
	 over every number field.
\end{abstract}

\maketitle

\tableofcontents

\section{Introduction}
Let $X$ be a smooth projective variety over a number field $F$. To any embedding $X \subset \PP_F^n$ of $X$ over $F$, we may associate a height function given by
\begin{equation} \label{eq:height}
    H(x)= \prod_{v \in \Val(F)}\max\{|x_0|_v,\ldots,|x_n|_v\},
\end{equation}
where $x=(x_0:\cdots:x_n) \in X(F)$ and  $|\cdot|_v$ is the usual absolute value associated to a place $v$ of $F$. The product formula $\prod_{v \in \Val(F)} |\lambda|_v=1$, for any $\lambda \in F^*$, implies that this  expression is independent of the choice of representation of $x$ in homogeneous coordinates. More generally, one may associate a height function $H_{\LL}$ to any adelically metrised line bundle $\LL=(L,||\cdot||)$ on $X$ (see Section \ref{Sec:adelic} for further details). The advantage of such a definition is that it is intrinsic, i.e it does not depend on a choice of embedding. In the case where $L$ is ample, the number of rational points of bounded height is finite and thus it makes sense to consider the counting function
$$N(\LL,U,B)=\#\{x \in U(F) : H_{\LL}(x) \leq B\},$$
for any $B>0$ and any open subset $U \subset X$. More generally still, if $L$ is big then the number of rational points of bounded height is finite on some open subset of $X$, thus we also obtain well-defined counting functions on certain open subsets of $X$. One can even define these counting functions for arbitrary adelically metrised line bundles $\LL$, where by convention if $U$ contains infinitely any rational points of bounded height we set $N(\LL,U,B) = \aleph_0$.

In the papers \cite{FMT89} and \cite{BM90}, Manin and his collaborators formulated various conjectures on the asymptotic behavior of these counting functions as $B \to \infty$. If we denote by $\Eff(X) \subset \NS_{\RR} X = \NS X \otimes_{\ZZ} \mathbb{R}$
the closed cone generated by the classes of effective divisors, then we define the Nevanlinna invariant of an effective line bundle $L$ on $X$ to be
\begin{equation}
	a(L)=\inf \{ r \in \QQ : r[L] + [\omega_X] \in \Eff(X)\}. \label{def:a(L)}
\end{equation}
Here $[L]$ denotes the class of $L$ in $\NS_{\RR} X$ and $\omega_X$ denotes the canonical bundle of $X$. Then, Manin and his collaborators conjectured that if $L$ is ample then for any $\varepsilon > 0$ there exists an open subset $U \subset X$ such that
\begin{equation}
	N(\LL,U,B) \ll_{\varepsilon,U, \LL} B^{a(L) + \varepsilon}, \label{conj:1}
\end{equation}
as $B \to \infty$. Note that in general one needs to restrict to some open subset in order to avoid ``accumulating subvarieties". For example,
a line on a smooth cubic surface $S \subset \PP^3_F$ contains roughly $B^2$ rational points of height less than $B$, whereas one has $a(\mathcal{O}_S(1))=1$.
They also conjectured a more precise asymptotic formula in the case where $[\omega_X]$ is not effective. Namely that, possibly after a finite field extension, there exists an open subset $U \subset X$ and a positive constant $c=c(\LL,U)$ such that
\begin{equation}
	N(\LL,U,B)= c B^{a(L)} (\log B)^{b(L)-1}(1 + o(1)), \label{conj:2}
\end{equation}
as $B \to \infty$, where $b(L)$ is the codimension of the minimal face of $\partial\Eff(X)$ which contains $a(L)[D] + [\omega_X]$. Note that one needs to assume that $a(L)[D] + [\omega_X]$ belongs to the polyhedral part of $\partial\Eff(X)$ for $b(L)$ to be well-defined.
The leading constant in this conjecture has also received a conjectural interpretation due to Peyre \cite{Pey95} in the case where $\omega^{-1}_X$ is ample,
in which case one has $a(\omega^{-1}_X)=1$ and $b(\omega^{-1}_X)=\rank \Pic X$.

However it turns out that this second conjecture (\ref{conj:2}) as stated is not true, and an explicit family of counterexamples over certain number fields was constructed by Batyrev and Tschinkel \cite{BT96}. Nevertheless (\ref{conj:2}) is still true in many cases, and sometimes in more generality
than originally stated (for example with $L$ big, rather than just ample). For example (\ref{conj:2}) is known for some del Pezzo surfaces (e.g. \cite{BB11} and \cite{Lou10}), flag varieties \cite{FMT89}, toric varieties \cite{BT98}, and various other equivariant compactifications of homogeneous spaces \cite{CLT02}. These conjectures have also been shown to be compatible with various geometrical constructions, such as products. However one important construction seems to have been so far overlooked, namely the \emph{Weil restriction}.

The Weil restriction (or restriction of scalars) was originally defined by Weil \cite{Wei82}, and is a way of constructing an algebraic variety
$\Res X$ over a smaller number field $E \subset F$ whose set of $E$-rational points is canonically in bijection with the $F$-rational
points of $X$ (see Section \ref{Sec:Weil} for a precise definition). In general, one expects the arithmetic properties of $X$ to be closely related to those of $\Res X$. For example if $A$ is an
abelian variety, then so is $\Res A$ and Milne \cite{Mil72} has shown that the Birch and Swinnerton-Dyer conjecture holds for $A$ if and only if it
holds for $\Res A$.
In this paper we address the question of how the counting problems for $X$ and $\Res X$ compare.

The first problem is to construct a height function on $\Res X$ from one on $X$. In Section \ref{Subsec:Weil_adelic_metric} we
show that given an adelically metrised line bundle $\LL$ on $X$,
there is a way to define an adelically metrised line bundle $\Res \LL$ on $\Res X$ which satisfies
\begin{equation}
    N(\LL,U,B)=N(\Res \LL,\Res U,B), \label{eqn:Weil_counting_functions}
\end{equation}
for any open subset $U \subset X$ and any $B>0$. Moreover this construction preserves positivity properties (such as effectiveness, ampleness and bigness) and also the canonical line bundle. This allows us to define the Weil restriction $\Res H_{\LL}$ of the associated height function $H_{\LL}$.
This leads to our main theorem.

\begin{theorem} \label{thm:Main}
    Let $E \subset F$ be number fields. Let $(X,\LL)$ be a smooth projective variety over $F$
    together with a big adelically metrised line bundle such that $X(F) \neq \emptyset$. 
    Let $\varepsilon > 0$ and let $U \subset X$ be an open subset.
    Then (\ref{conj:1}) holds for $(X,U,\LL)$ if and only if (\ref{conj:1}) holds for $(\Res X, \Res  U,\Res \LL)$.

    Moreover, if $[\omega_X]$ is not effective and $H^1(X,\mathcal{O}_X)=0$, then (\ref{conj:2}) holds for $(X,U,\LL)$ if and only if (\ref{conj:2}) holds for $(\Res X,\Res U,\Res \LL)$.
\end{theorem}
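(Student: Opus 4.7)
The starting point is equation~(\ref{eqn:Weil_counting_functions}), which already supplies the pointwise identity $N(\LL,U,B) = N(\Res\LL,\Res U,B)$. Both conjectural statements therefore depend on $(X,\LL)$ and $(\Res X,\Res\LL)$ only through the numerical invariants $a$ and $b$ of the underlying line bundles, so my plan is to reduce the theorem to proving the equalities $a(L) = a(\Res L)$ and, for the second assertion, the additional identity $b(L) = b(\Res L)$.

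For the Nevanlinna invariant I would argue as follows. The inequality $a(\Res L) \leq a(L)$ is immediate from the preservation of effectivity under $\Res$ and the identity $\Res\omega_X = \omega_{\Res X}$ already established in the paper: any witness $r$ with $r[L] + [\omega_X] \in \Eff(X)$ yields $r[\Res L] + [\omega_{\Res X}] \in \Eff(\Res X)$ after applying $\Res$. For the converse inequality I would base change to a separable closure $\Ebar$, exploiting the product decomposition $\Res X \otimes_E \Ebar \cong \prod_{\sigma} X^\sigma$ indexed by the embeddings $\sigma: F \hookrightarrow \Ebar$. An effective $\RR$-representative of $r[\Res L] + [\omega_{\Res X}]$ pulls back to a $G_E$-invariant effective class on the product, whose projection to any single factor produces a Galois-invariant effective class on $X_{\Fbar}$ representing $r[L_{\Fbar}] + [\omega_{X_{\Fbar}}]$, which a standard Galois-averaging argument descends to an effective representative over $F$.

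For the second assertion, the hypothesis $H^1(X,\mathcal{O}_X) = 0$ forces $\Pic^0_{X/F} = 0$ and, via a Künneth computation on the product decomposition, yields $H^1(\Res X,\mathcal{O}_{\Res X}) = 0$ as well, so that $\Pic = \NS$ on both varieties and descent for line bundles becomes descent for Néron--Severi classes. A Shapiro-type computation then identifies
\begin{equation*}
    \NS(\Res X)_\RR \cong \Bigl(\bigoplus_\sigma \NS(X^\sigma_{\Ebar})_\RR\Bigr)^{G_E} \cong \NS(X_{\Fbar})_\RR^{G_F} \cong \NS(X)_\RR,
\end{equation*}
and under this isomorphism $[\Res L] \leftrightarrow [L]$, $[\omega_{\Res X}] \leftrightarrow [\omega_X]$ and, refining the argument of the previous paragraph, $\Eff(\Res X) \leftrightarrow \Eff(X)$. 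Consequently the minimal face of $\partial\Eff$ containing $a(L)[L] + [\omega_X]$ corresponds to the minimal face containing $a(\Res L)[\Res L] + [\omega_{\Res X}]$, their codimensions agree, and $b(L) = b(\Res L)$.

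The main subtlety I foresee is the Néron--Severi identification: one must verify that Galois descent genuinely recovers the full rational Picard group of $\Res X$ (which is exactly where $H^1(X,\mathcal{O}_X) = 0$ is needed, to rigidify the Picard scheme and suppress the $\Pic^0$ contribution) and that the effective cone descends cleanly through the Shapiro isomorphism rather than merely sitting inside the invariant subspace. Once these geometric identifications are in place, the matching of the $a$- and $b$-invariants, and hence the theorem itself via (\ref{eqn:Weil_counting_functions}), follows essentially formally.
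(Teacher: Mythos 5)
Your overall architecture coincides with the paper's: reduce, via the counting-function identity of Lemma \ref{lem:heights}, to the equalities $a(L)=a(\Res L)$ and $b(L)=b(\Res L)$; get the easy inequality from preservation of effectivity and of the canonical bundle (Lemma \ref{lem:line_bundles}); and, for the $b$-invariant, use $H^1(X,\mathcal{O}_X)=0$ together with a K\"unneth/Shapiro computation to identify $\NS_\RR(\Res X)$ with $\NS_\RR(X)$ compatibly with the effective cones (Lemma \ref{lem:Picard}). The genuine divergence is in the converse inequality $a(L)\le a(\Res L)$. You propose to restrict a pseudo-effective class on $\prod_\sigma X^\sigma$ to a single factor and then descend by Galois averaging; the paper instead stays at the level of global sections: K\"unneth gives $h^0(\Res X,\Res L')=h^0(X,L')^d$, so $\Res L'$ is effective iff $L'$ is, and the isomorphism $\Res:\Pic^0 X\to\Pic^0\Res X$ (Lemma \ref{lem:Pic0}, itself a Shapiro argument requiring $X(F)\neq\emptyset$) shows that every line bundle algebraically equivalent to $\Res L'$ is again a Weil restriction, whence $\Pic^{L'}X$ contains an effective bundle iff $\Pic^{\Res L'}\Res X$ does. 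The paper's route buys a clean statement about line bundles with no auxiliary choices, and it is precisely how it handles the case $H^1(X,\mathcal{O}_X)\neq 0$ relevant to the first assertion; your route avoids the $\Pic^0$ bookkeeping altogether by working with $\NS_\RR$-classes, but you should make two points precise: ``projection to a factor'' must mean restriction to a fibre $X^{\sigma_1}\times\{y\}$ chosen generally enough that pseudo-effectivity is preserved (pushforward of divisor classes along the projection is not available), and the restricted class is not $G_E$-invariant (the factors are permuted) but is the image of the $F$-rational class $r[L]+[\omega_X]$, which is what the averaging step actually needs. With those clarifications your argument is sound and, like the paper's, shares the standard looseness of passing between effective divisors and the closed cone they generate.
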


Examples of varieties for which $[\omega_X]$ is not effective and $H^1(X,\mathcal{O}_X)=0$ include all rationally connected varieties \cite[Cor.~4.18]{Deb01}, in particular all geometrically rational varieties and all Fano varieties. For these latter classes of varieties we also show that the refined conjecture due to Peyre \cite{Pey95}, on the leading constant appearing in the asymptotic formula, is compatible with the Weil restriction.

An immediate corollary of Theorem \ref{thm:Main} is that if Manin's conjectures hold for $(X,\LL)$, then they also hold for $(\Res X, \Res \LL)$. The problem with the converse is that the open subset $U' \subset \Res X$ for which (\ref{conj:1}) or (\ref{conj:2}) holds might not be of the form $\Res U$ for
some open subset $U \subset X$. Nevertheless, there are many examples where this is true.
For example, for flag varieties (\ref{conj:2}) holds on the whole space, i.e. it is not necessary to
restrict to an open subset. Therefore the equivalence of Manin's conjecture for a flag variety $X$ and for its Weil restriction $\Res X$, which is also a flag variety, is clear.
Hence we see that Manin's conjecture for all choices of adelic metric on every big line bundle on every flag variety over $\QQ$ is equivalent to the same conjecture for all flag varieties \emph{over any number field}.
For toric varieties, Manin's conjecture is known to hold on the open subset given by the embedded algebraic torus $T$. As the Weil restriction $\Res X$ of such a toric variety $X$ is a toric variety under the algebraic torus $\Res T$, we again see that one may reduce the proof of Manin's conjecture for all
toric varieties over every number field to those toric varieties which are defined over $\QQ$. Similar results hold for equivariant compactifications of other homogeneous spaces and for varieties for which every accumulating subvariety of $\Res X$ is of the form $\Res Z$ for some subvariety $Z \subset X$.
Using Theorem \ref{thm:Main} we are also able to obtain many new cases of Manin's conjecture,
given as the Weil restrictions of suitable complete intersections.

\begin{theorem}\label{thm:CI_Weil}
	Let $E\subset F$ be number fields and let $X \subset \PP^n$ be a non-singular complete intersection of $m$
	hypersurfaces over $F$ each of the same degree $r$. Suppose that $$n \geq  m(m+1)(r-1)2^{r-1} + m,$$
	and $X(\Adele_F) \neq \emptyset$. Let $H$ be the height function (\ref{eq:height}) on $X$. 
	Then Manin's conjecture (\ref{conj:2}) with Peyre's constant holds for $\Res X$ with respect to $\Res H$.
\end{theorem}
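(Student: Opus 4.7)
The strategy is to apply Theorem~\ref{thm:Main} to reduce to Manin's conjecture for $X$ itself over $F$, and then to appeal to the Hardy--Littlewood circle method.

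First I would verify the hypotheses of Theorem~\ref{thm:Main} for $(X,X,\LL)$, where $\LL=\mathcal{O}_X(1)$ is equipped with its standard adelic metric, so that $H_{\LL}=H$. This bundle is ample, hence big. The numerical condition $n\geq (m+1)(r-1)2^{r-1}+m$ forces $\dim X\geq(m+1)(r-1)2^{r-1}\geq 4$ once $r\geq 2$, so the Lefschetz hyperplane theorem gives $\NS X_{\Qbar}=\ZZ\cdot[\mathcal{O}_X(1)]$ together with $H^1(X,\mathcal{O}_X)=0$. By adjunction $\omega_X\cong\mathcal{O}_X(mr-n-1)$, and the same inequality yields $n+1>mr$, so $[\omega_X]$ is not effective; indeed $X$ is a smooth Fano variety. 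Finally, the Birch-type circle-method bounds used below establish the Hasse principle for $X$, so $X(\Adele_F)\neq\emptyset$ upgrades to $X(F)\neq\emptyset$. By Theorem~\ref{thm:Main} it then suffices to prove Manin's conjecture~(\ref{conj:2}) with Peyre's constant for $(X,X,\LL)$ over $F$.

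To establish this, I would invoke the Hardy--Littlewood circle method. Because $\Pic X$ has rank one, one computes $a(\LL)=n+1-mr$ and $b(\LL)=1$. Under the hypothesis $n\geq(m+1)(r-1)2^{r-1}+m$, Birch's theorem (in its form for smooth systems of equal-degree forms, together with the extension to arbitrary number fields due to Skinner) yields an asymptotic of the expected shape for the number of primitive $\mathcal{O}_F$-tuples in a box of side $B$ satisfying the defining system, with main term a singular series times a singular integral. A standard M\"obius inversion and sum over unit representatives translates this affine count into
\[
N(\LL,X,B)=cB^{n+1-mr}(1+o(1))
\]
for a positive constant $c$ given by a convergent product of local densities $\sigma_v$.

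The main obstacle is identifying $c$ with Peyre's constant. Since $\NS X_{\Qbar}=\ZZ$ carries trivial Galois action and $H^1(X,\mathcal{O}_X)=0$, the Brauer--Manin obstruction is trivial and Peyre's formula collapses to the anticanonical Tamagawa measure of $X(\Adele_F)$ times the combinatorial factor $\alpha(X)=1/a(\LL)$. For smooth complete intersections the local factors $\sigma_v$ produced by the circle method coincide, after the standard rescaling between the chosen polarisation $\mathcal{O}_X(1)$ and the anticanonical one, with the local Tamagawa measures entering Peyre's constant; this identification is carried out in \cite{Pey95} and the subsequent literature. Hence $c$ equals the predicted Peyre constant, and a final application of Theorem~\ref{thm:Main} transports Manin's conjecture with Peyre's constant from $(X,X,\LL)$ to $(\Res X,\Res X,\Res\LL)$, completing the proof.
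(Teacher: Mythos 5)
Your overall architecture is the same as the paper's: reduce to Manin's conjecture for $X$ itself over $F$ via the Weil-restriction compatibility, then run the circle method for the equal-degree complete intersection. Your verification of the hypotheses of Theorem~\ref{thm:Main} (Lefschetz giving $H^1(X,\mathcal{O}_X)=0$ and Picard rank one, adjunction giving $\omega_X$ anti-ample, the Hasse principle upgrading $X(\Adele_F)\neq\emptyset$ to $X(F)\neq\emptyset$) is fine, as is the computation $a(\LL)=n+1-mr$, $b(\LL)=1$, $\alpha(X)=1/(n+1-mr)$, $\beta(X)=1$. However, there are two genuine gaps.

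The main one is the sentence ``A standard M\"obius inversion and sum over unit representatives translates this affine count into $N(\LL,X,B)=cB^{n+1-mr}(1+o(1))$.'' This is precisely where the real work lies, and it is the gap in the literature that the paper is explicitly filling. Skinner's theorem (the number-field extension of Birch) produces asymptotics for integral points of the affine cone lying in \emph{boxes} defined coordinate-wise via an integral basis of $\mathcal{O}_F$; the regions cut out by the height function are Schanuel-type fundamental domains for the unit action intersected with archimedean norm balls, which are not boxes. One must first prove a counting result for arbitrary bounded open regions $O\subset F_\infty^{n+1}$ whose boundary is null for the Leray measure, by sandwiching $O$ between finite unions of small boxes $O_{k,-}\subset O\subset O_{k,+}$, applying Skinner to each box, and letting the box size shrink (this is Lemma~\ref{lem:Balls} in the paper). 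Moreover, the M\"obius inversion over ideals $\aaa$ requires summing infinitely many such counts, and the error term from the box argument is not uniform in $\aaa$; the paper circumvents this with Schanuel's trick of choosing the regions $O_{\aaa}$ so that $N(O_{\aaa},\aaa,B)=N(O,\cc_{\aaa},B/\mathcal{N}(\lambda_{\aaa}))$, reducing to finitely many regions. Your proposal offers no argument for either point.

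A secondary gap: Theorem~\ref{thm:Main} as stated only transports the asymptotic \emph{shape} (\ref{conj:2}), i.e.\ the existence of some positive constant; it says nothing about the constant being Peyre's. To conclude that $\Res X$ satisfies Manin's conjecture \emph{with Peyre's constant} you additionally need that $\alpha$, $\beta$ and the Tamagawa number are each preserved under Weil restriction, which is the content of Lemma~\ref{lem:alpha_beta} and Theorem~\ref{thm:Tamagawa} (the latter being a nontrivial local--global computation with the measures). Your ``final application of Theorem~\ref{thm:Main}'' does not supply this.
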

We are in fact able to handle more general height functions than (\ref{eq:height}), namely we allow
arbitrary norms at the archimedean places, rather than simply the maximum norm (see Section \ref{Sec:CI}
for a precise statement).
The varieties $\Res X$ occurring in Theorem \ref{thm:CI_Weil} are complete intersections in the Weil
restriction $\Res \PP^n$ of projective space. This result therefore follows the philosophy
emphasised in \cite{Pey01}, namely that of studying Manin's conjecture for complete intersections
inside arbitrary Fano varieties, rather than just complete intersections in the usual projective space.
In \cite{Ski97}, Skinner used the circle method to prove that weak approximation holds 
for the complete intersections $X$ occurring in Theorem \ref{thm:CI_Weil},
by counting rational points in certain ``boxes''. Skinner's boxes are quite different 
however from the regions cut out by height functions, with the outcome being that Skinner's main theorem does not directly imply Manin's conjecture.
In order to prove Theorem \ref{thm:CI_Weil}, we show that Skinner's result
may indeed be used to prove Manin's conjecture for such complete intersections.
Our proof proceeds by covering the region of interest with Skinner's boxes and then applying his result to each such box.
Other new cases of Manin's conjecture may be obtained by applying Theorem \ref{thm:Main} to the 
del Pezzo surfaces over number fields recently considered in
\cite{DF13a}, \cite{DF13b}, \cite{DF14} and \cite{FP13}.

As we have already noted, Manin's conjecture (\ref{conj:2}) is not true in general and a family of counterexamples
was constructed in \cite{BT96}. However these counterexamples were only constructed over those number fields
which contain $\QQ(\sqrt{-3})$, in particular the existence of counterexamples over $\QQ$ was left open.
In Section \ref{Sec:counterexample} we apply the Weil restriction to the construction of \cite{BT96} to produce counterexamples
to Manin's conjecture (\ref{conj:2}) over \emph{any} number field.

\begin{theorem} \label{thm:counterexample}
    Let $E$ be a number field. Then there exists a Fano variety $X$ over $E$ such that
    for every number field $E \subset F$, every open subset $U \subset X_F$ and every
    choice of adelic metric on $\omega_{X_F}^{-1}$ we have
    $$N(\omega_{X_F}^{-1},U,B) \gg B(\log B)^{\rho(X_F)+1},$$
    as $B \to \infty$, where $\rho(X_F)= \rank \Pic X_F$.
\end{theorem}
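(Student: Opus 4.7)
The plan is to apply the Weil restriction construction to the Batyrev--Tschinkel Fano counterexample $Y$ of \cite{BT96}, which is defined over any field containing $\QQ(\sqrt{-3})$ and satisfies $N(\omega_Y^{-1},U,B)\gg B(\log B)^{\rho(Y)+1}$ on every non-empty open $U$. If $\QQ(\sqrt{-3})\subset E$, one takes $X=Y_E$ and the claim reduces to \cite{BT96} applied over each $F\supset E$. Otherwise set $F_0 := E(\sqrt{-3})$ and $X := \RRes{F_0}{E} Y$; by Section~\ref{Sec:Weil}, $X$ is smooth and Fano over $E$ with $\omega_X^{-1}\cong\RRes{F_0}{E}\omega_Y^{-1}$.

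For an arbitrary number field $F\supset E$, compatibility of Weil restriction with base change gives $X_F \cong \RRes{F_0\otimes_E F}{F} Y_{F_0\otimes_E F}$. Since $[F_0:E]=2$, either $F_0\otimes_E F$ is a quadratic field extension $F'_0:=F_0F$ of $F$ (when $\sqrt{-3}\notin F$), so $X_F\cong\RRes{F'_0}{F} Y_{F'_0}$, or $F_0\otimes_E F\cong F\times F$ (when $\sqrt{-3}\in F$), so $X_F\cong Y_F\times Y_F^\sigma$ with $\sigma$ the non-trivial element of $\Gal(F_0/E)$. A computation of $\NS(X_F)$ as a Galois module, noting that $\Gal(\overline{F}/F)$ permutes the two copies of $\NS(Y_{\overline{F_0}})$ in the first case and stabilises each in the second, yields $\rho(X_F)=\rho(Y_{F'_0})$ respectively $\rho(X_F)=2\rho(Y_F)$. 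In the first case, identity $(\ref{eqn:Weil_counting_functions})$ immediately transfers the B--T lower bound on $Y_{F'_0}$ to $X_F$; in the second case, the standard convolution of the asymptotics on each factor gives the even stronger bound $N\gg B(\log B)^{2\rho(Y_F)+3}$.

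The main obstacle is upgrading from a lower bound valid on opens of the form $\RRes{F'_0}{F} U'$ (or $U_1\times U_2$ in the product case) to one valid on an arbitrary non-empty open $U\subset X_F$. I would handle this by Weil-restricting the fibration $\pi\colon Y\to\PP^3_{F_0}$ (projection to the $y$-coordinates) which underlies the B--T excess count; this yields a fibration of $X_F$ whose generic fiber is a linear subvariety of the Weil restriction of $\PP^3$. For any non-empty $U\subset X_F$, the set of $F$-rational base points whose fibers meet $U$ in a dense open subset is itself Zariski dense in the base, and a fibrewise application of Schanuel's theorem, weighted by the anticanonical height of the base point and summed, reproduces the lower bound $\gg B(\log B)^{\rho(X_F)+1}$ uniformly in $U$. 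Independence of the adelic metric is automatic since any two metrics on $\omega_{X_F}^{-1}$ give commensurable heights.
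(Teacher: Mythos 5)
Your global architecture agrees with the paper's: Weil-restrict the Batyrev--Tschinkel hypersurface (\ref{eqn:BT}) from $E(\sqrt{-3})$ down to $E$, and for a general $F \supset E$ distinguish the cases $\sqrt{-3} \in F$ (product case) and $\sqrt{-3} \notin F$ (genuine Weil restriction over a quadratic extension); your Picard rank computations in the two cases match the paper's. The problem is the step you yourself flag as the main obstacle, and the fix you propose for it does not work.

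First, the fibration you choose is the wrong one. The projection of (\ref{eqn:BT}) to the $y$-coordinates has \emph{linear} fibres (hyperplanes in $\PP^3_x$), and this fibration is not what produces the Batyrev--Tschinkel excess: a fibrewise application of Schanuel's theorem to linear fibres produces no logarithms at all, and the summation over base points weighted by height contributes at most a single factor of $\log B$ --- i.e.\ it reproduces the Manin prediction $B(\log B)^{\rho-1}$, not the excess $B(\log B)^{\rho+1}$. The excess comes from the \emph{other} projection, $(x,y)\mapsto x$, whose fibres are diagonal cubic surfaces; over a Zariski dense set of base points these are \emph{split}, have Picard rank $7$, and each single such fibre already carries $\gg B(\log B)^{3}$ points of bounded anticanonical height. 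Second, even with the correct fibration, the real technical content is a lower bound of the right order on \emph{arbitrary} open subsets of \emph{Weil restrictions of split cubic surfaces} over an arbitrary extension $F$ of $E$ --- including the non-split case where $X_F$ is not a product and no asymptotic for the factors is available (Manin's conjecture for cubic surfaces is open, so your ``convolution of the asymptotics on each factor'' also has nothing to convolve, and in any case would only cover product opens). The paper supplies this as Lemma \ref{lem:split_cubic}: contract three skew lines to a split sextic del Pezzo surface, Weil-restrict, invoke the toric asymptotic of Batyrev--Tschinkel together with the Chambert-Loir--Tschinkel equidistribution statement to pass to the image of an arbitrary open subset, and compute the Picard rank of the restricted toric surface in the two cases ($8$ or $4$). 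Without an argument of this kind your proposal does not close; the remaining ingredients (base-change identification of $X_F$, density of the good fibres via $\varphi'=\RRes{F_0}{E}\varphi$, commensurability of heights for different metrics on $\omega_{X_F}^{-1}$) are correct and coincide with the paper's.
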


After Milne's article \cite{Mil72}, the Weil restriction became a useful tool for researchers
working on the Birch and Swinnerton-Dyer conjecture. The author hopes that the same will become
true of Manin's conjectures.

The layout of this paper is as follows. In Section \ref{Sec:Weil} we recall certain facts about the Weil restriction and also define the Weil restriction of
a line bundle. Section \ref{Sec:adelic} contains various results on adelically metrised line bundles and height functions, and we also define
the Weil restriction of an adelically metrised line bundle. We finish the paper by proving our main theorems in Section \ref{Sec:Manin}, together with the fact that
Peyre's conjectural constant is compatible with the Weil restriction.

\medskip
\noindent\textbf{Acknowledgments:}
The majority of this work was completed whilst the author was working at l'Institut de Math\'{e}matiques de Jussieu
and supported by ANR PEPR.
The author would like to thank Tim Browning, Marc Hindry, Emmanuel Peyre and Tomer Schlank for useful comments and ideas.

\subsection*{Notation}
\subsubsection*{Geometry}
For a field $F$, we denote by $\PP_F^n$ and $\mathbb{A}_F^n$ projective $n$-space and affine $n$-space over $F$ respectively. We sometimes omit the subscript $F$ if the field is clear.
A variety over $F$ is a separated geometrically integral scheme of finite type over $F$. For every field, we fix a choice of algebraic closure $\Fbar$ and we denote by $G_F$ the absolute Galois group of $F$
with respect to $\Fbar$.

By a line bundle, we mean a locally free sheaf of rank one. Given a line bundle $L$ on a scheme $X$ over a field $F$,
we denote by $\Xbar$ and $\Lbar$ the base change of $X$ and $L$ to $\Fbar$, respectively. We denote by $\Pic^0 X$ the subgroup of
$\Pic X$ of line bundles which are algebraically equivalent to $\mathcal{O}_X$, and by $\Pic^L X$ the subset of $\Pic X$ consisting of those line bundles which are algebraically equivalent to a fixed line bundle $L$.
Note that $\Pic^L X$ is a torsor for $\Pic^0 X$. Given a line bundle $L$ on a variety $X$, we denote by $[L]$ the class of $L$ in $\NS_{\RR} X$ and by $\omega_X$ the canonical bundle of $X$
if $X$ is also smooth.
The symbol $\boxtimes$ is used to denote the external tensor product. Namely, given line bundles $L_i$ on varieties $X_i$ $(i=1,2)$,
we define $L_1 \boxtimes L_2 = \pi^*_1 L_1 \otimes \pi^*_2 L_2$ as a line bundle on $X_1 \times X_2$, where $\pi_i:X_1 \times X_2 \to X_i$ denote the canonical projections $(i=1,2)$.

\subsubsection*{Number theory}
For any number field $F$, we denote by $\mathcal{O}_{F}$ the ring of integers of $F$ and by $\Val(F)$ the set of valuations of $F$. For any $v \in \Val(F)$, we denote by $F_v$ (resp. $\mathcal{O}_{F_v}$)  the completion of $F$ (resp. $\mathcal{O}_{F}$) with respect to $v$. Given a finite set of places $S \subset \Val(F)$ containing all archimedean places, we denote by
$\mathcal{O}_{F,S}$ the ring of $S$-integers of $\mathcal{O}_{F}$.
We choose absolute values on each $F_v$ such that $|x|_v=|N_{F_v/\QQ_p}(x)|_p$,
where $v|p \in \Val(\QQ)$ and $|\cdot|_p$ is the usual absolute value on $\QQ_p$.
The advantage of these choices is that we have the
following product formula
$$\prod_{v \in \Val(F)}|x|_v =1 , \quad \text{for all } x \in F^\times.$$
We denote by $F_\infty = F \otimes_\QQ \RR=\prod_{v \mid \infty} F_v$.
We also choose algebraic closures $F_v \subset \Fbar_v$ and we equip $\Fbar_v$ with the unique
absolute value extending the absolute value on $F_v$.
We choose Haar measures $\mathrm{d}x_v$ on each $F_v$
such that $$\int_{\mathcal{O}_{F_v}}\mathrm{d}x_v = 1,$$
for all but finitely many archimedean $v$. We equip the adeles
$\Adele_F$ of $F$ with the induced Haar measure and denote by 
$\mu_F$ the volume of $\Adele_F/F$ with respect to the induced quotient measure.

\section{The Weil restriction} \label{Sec:Weil}
We begin by recalling the definition of the Weil restriction. The Weil restriction was originally defined by Weil in \cite{Wei82} (which he called the restriction of scalars), however we follow a more modern approach as can be found in \cite[Ch.7.6]{BLR90}.

Let $A$ be a commutative ring and let $B$ be an $A$-algebra which as an $A$-module is finite and locally free (e.g. $A \subset B$ is a finite field extension).
For any scheme $X$ over $B$ we define the functor $\RRes{B}{A} X$, from the dual of the category of schemes defined over $A$ to the category
of sets, to be the right adjoint of base change. That is to say, we define
$$\RRes{B}{A} X (S) = X(S \times_A B),$$
for any $A$-scheme $S$. If this functor is representable by a scheme over $A$, then we call this scheme (also denoted by $\RRes{B}{A} X$) the Weil restriction of $X$.
For our purposes, it will be sufficient to know that the Weil restriction exists whenever $X$ is quasi-projective over $B$ (see \cite[Thm.~7.6.2]{BLR90}).
Moreover if $B$ is \'{e}tale (e.g. $A \subset B$ is a finite separable field extension), then the Weil restriction of an affine, projective or smooth scheme is also affine, projective or smooth, respectively
\cite[Prop.~7.6.5]{BLR90}. The assignment of the Weil restriction can be viewed as a functor $\RRes{B}{A}$ in its own right and this functor preserves open and closed immersions and fibre products \cite[Prop.~7.6.2]{BLR90}.

We shall be particularly interested in the case of a finite extension $E \subset F$ of perfect fields of degree $d$.
We denote by $\sigma_i:F \to \Ebar$ the embeddings of $F$ into $\Ebar$ for $i=1,\ldots,d$. Then given a scheme $X$ over $F$, the counit of the adjunction gives rise to a morphism $p: \Res X \to X$ defined over $F$ which induces an isomorphism $$P:=\prod_{\sigma}p^{\sigma}: \Res X \to \prod_{i=1}^d X^{\sigma_i},$$
over $\Ebar$, where $X^{\sigma_i}=X \times_{\sigma_i} \Ebar $ denotes the conjugate of $X$ with respect to $\sigma_i$.
\begin{example}
    \begin{enumerate}
        \item The Weil restriction of the affine line $\mathbb{A}^1_F$ over $F$ is the affine space $\mathbb{A}^d_E$ over $E$.
        The morphism $p$ can be realised as
        $$
            \mathbb{A}^d_F \to \mathbb{A}^1_F, \qquad
            (x_1,\ldots,x_d) \mapsto \sum_{i=1}^d \alpha_i x_i,
        $$
        where $\alpha_1,\ldots,\alpha_d$ is a choice of basis for the field extension $E \subset F$.
        Given that the functor $\Res$ preserves affine varieties, fibre products
        and closed embeddings, this gives a simple way to write down equations for the Weil restrictions of affine varieties.
        \item Equations for the Weil restrictions of projective varieties are not as simple in general.
        For example,
        if $E\subset F$ is a quadratic field extension and $X=\PP^1_F$,
        then $\Res X$ can be embedded as a quadratic surface in $\PP^3_E$. Indeed, $\Res X$
        is isomorphic to $\PP^1_F \times \PP_F^{1\sigma} \cong \PP^1_F \times \PP^1_F$ over $F$, where $\sigma$
        is the non-trivial element of $\Gal(F/E)$. If $(x,y) \in \Res X(E)$, then the two divisors
        $L_1=\PP^1_F \times \{y\}$ and $L_2=\{x\} \times \PP^{1\sigma}_F$ are swapped by $\Gal(F/E)$. Hence the divisor
        $L_1 + L_2$ is defined over $E$ and moreover gives the required embedding $\Res X \hookrightarrow \PP_E^3$.
        For general $d$, a similar argument shows that equations for $\Res \PP^n_F$ can be given by some appropriate twist of
        the Segre embedding of $\prod_{i=1}^d \PP^n_E$.
    \end{enumerate}
\end{example}

\subsection{The norm of a line bundle}
We now recall some facts that we shall need on the norm of a line bundle (see e.g. \cite[Sec. 6.5]{EGAII} or \cite[Sec. 4.1]{Oes84}). Let $A$ be a commutative ring and let $B$ be an $A$-algebra
which as an $A$-module is finite and  locally free of rank $d$ (e.g. $A \subset B$ is a finite field extension of degree $d$).
Let $X_A$ be a reduced Noetherian scheme of finite type over $A$ and let $L$ be a line bundle on $X_B=X_A \times_A B$. Then
if $f:X_B \to X_A$ denotes the base change map, it follows that $f_*L$ is a vector bundle of rank $d$ on $X_A$. We define
$$\Norm{B}{A}(L) = \Hom_{\mathcal{O}_{X_A}}(\det f_*\mathcal{O}_{X_B}, \det f_*L),$$
which is a line bundle on $X_A$. We have the following properties.
\begin{enumerate}
    \item There is a canonical isomorphism $\Norm{B}{A} (\mathcal{O}_{X_B}) \cong \mathcal{O}_{X_A}$.
	\item The norm functor respects base change, i.e. if $A'$ is an $A$-algebra then we have a canonical isomorphism
		$$ \Norm{B}{A}( L) \otimes_A A' \cong \Norm{B'}{A'}( L), $$
		where $B' = A' \otimes_A B$. In particular if $E \subset F$ is a finite field extension of perfect fields, there is a canonical isomorphism
		$\overline{\Norm{F}{E}(L)} \cong \otimes_{i=1}^d L^{\sigma_i}$ over $\Ebar$.
	\item If $L_1$ and $L_2$ are two line bundles on $X_B$, then
    $$\Norm{B}{A} (L_1 \otimes L_2) \cong \Norm{B}{A}( L_1) \otimes \Norm{B}{A}( L_2),$$ canonically.
	In particular we obtain an induced homomorphism $\Norm{B}{A}:\Pic X_B \to \Pic X_A$ of Picard groups.
	\item To any local section $s$ of $L$ is associated a local section $\Norm{B}{A}(s)$ of $\Norm{B}{A}(L)$.
\end{enumerate}
Note that our notation differs slightly from that of \cite[Sec. 6.5]{EGAII}, where the norm is defined for more
general finite morphisms of schemes. We have used different notation here, as we shall only be taking the norm
with respect to finite morphisms arising from base change.

\subsection{The Weil restriction of a line bundle}
We now define the Weil restriction of a line bundle. Throughout this section $E \subset F$ is a finite extension of perfect fields of degree $d$. We denote by
$\sigma_i:F \to \Ebar$ the embeddings of $F$ into $\Ebar$ $(i=1,\ldots,d)$.
We define the Weil restriction of a line bundle $L$ on a quasi-projective variety $X$ over $F$ to be
\begin{equation} \label{def:Weil_Res_Line_bundle}
    \Res L = \Norm{F}{E} (p^* L).
\end{equation}
Note that over $\Ebar$ we have isomorphisms 
$$\overline{\Res L} \cong \otimes_{i=1}^d (p^* L)^{\sigma_i} \cong P^*(\boxtimes_{i=1}^d L^{\sigma_i}).$$
This induces an injective homomorphism $\Res : \Pic X \to \Pic \Res X$. Indeed, 
the fact that it is a homomorphism follows from the properties of the norm map
and the pull-back. To see that it is injective, note that if $\Res L \cong \mathcal{O}_{\Res X}$,
then $\boxtimes_{i=1}^d L^{\sigma_i} \cong \mathcal{O}_{\prod_{i=1}^d X^{\sigma_i}}$
and hence $L^{\sigma_i} \cong \mathcal{O}_{X^{\sigma_i}}$. Thus $L \cong \mathcal{O}_X$, as required.
We similarily obtain injective homomorphisms $\Res: \Pic^0 X \to \Pic^0 \Res X$ and $\Res: \NS X \to \NS \Res X.$
For any local section $s$ of $L$, we also obtain a local section $\Res s=\Norm{F}{E} (p^*s)$ of $\Res L$.

\begin{lemma} \label{lem:line_bundles}
Let $(X,L)$ be a smooth projective variety over $F$ together with a line bundle. Then
\begin{enumerate}
    \item $\Res \omega_X \cong \omega_{\Res X}$, where $\omega_X$ denotes the canonical line bundle of $X$. \label{item:canonical}
    \item $L$ is effective (resp. big, resp. ample) if and only if the same holds for $\Res L$. \label{item:positivity}
\end{enumerate}
\end{lemma}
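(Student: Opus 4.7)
The plan is to reduce both parts to computations over $\Ebar$, exploiting the isomorphism $\overline{\Res X}\cong \prod_{i=1}^d X^{\sigma_i}$ given by $P=\prod_i p^{\sigma_i}$ together with the identification $\overline{\Res L}\cong P^*(\boxtimes_{i=1}^d L^{\sigma_i})$ already established in the excerpt. All three positivity notions in (ii), as well as the formation of the canonical bundle, commute with base change to $\Ebar$, so an equivalence at the geometric level transports back down to $E$.

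For part (i), I would use the standard formula $\omega_{Y_1\times\cdots\times Y_d}\cong \boxtimes_{i=1}^d \omega_{Y_i}$ for a product of smooth projective varieties (coming from $\Omega^1_{\prod Y_i}\cong \bigoplus \pi_i^*\Omega^1_{Y_i}$). Applied to $Y_i=X^{\sigma_i}$ and combined with the $P^*$ description of $\overline{\Res L}$ for $L=\omega_X$, this yields
$$\omega_{\overline{\Res X}}\cong P^*\bigl(\boxtimes_{i=1}^d \omega_{X^{\sigma_i}}\bigr)\cong \overline{\Res \omega_X}.$$
To descend from $\Ebar$ to $E$ one must check that this chain of identifications is $G_E$-equivariant; since every ingredient (the counit $p$, the norm functor, and the sheaves of differentials) is a canonical functorial construction compatible with base change, this is automatic, and the resulting isomorphism $\omega_{\Res X}\cong \Res \omega_X$ lives over $E$.

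For part (ii), effectivity, bigness and ampleness of a line bundle are preserved by base change to $\Ebar$ and by the operation $M\mapsto M^{\sigma}$, so it suffices to show that $\boxtimes_i L^{\sigma_i}$ on $\prod_i X^{\sigma_i}$ has each property if and only if $L$ does. For effectivity, a section $s$ of $L$ gives the section $\Res s$ of $\Res L$ by functoriality of the norm; conversely, K\"unneth yields $H^0(\prod X^{\sigma_i},\boxtimes L^{\sigma_i})\cong \bigotimes_i H^0(X^{\sigma_i},L^{\sigma_i})$, forcing each factor to be non-zero whenever the tensor product is. Ampleness is handled by the standard fact that a tensor product of pullbacks of ample line bundles along the projections of a product is ample, with the converse obtained by restricting to a fibre $\{x\}\times\cdots\times X^{\sigma_i}\times\cdots\times\{x\}$. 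Bigness follows from the multiplicativity
$$h^0\bigl(\prod_i X^{\sigma_i},(\boxtimes_i L^{\sigma_i})^n\bigr)=\prod_i h^0\bigl(X^{\sigma_i},(L^{\sigma_i})^n\bigr),$$
whose right-hand side grows like $n^{d\dim X}=n^{\dim \Res X}$ precisely when each factor grows like $n^{\dim X}$.

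The main obstacle I expect lies in part (i): one has to verify carefully that the identification $\omega_{\overline{\Res X}}\cong \overline{\Res \omega_X}$ is $G_E$-equivariant and so descends, which amounts to tracing through the compatibility of the norm functor, the product decomposition $P$, and the sheaf of relative differentials with Galois conjugation. Once this is done, part (ii) is a formal consequence of the geometric product decomposition together with elementary facts about external tensor products.
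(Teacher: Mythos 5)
Your proposal is correct and follows essentially the same route as the paper: reduce to the product decomposition $\overline{\Res X}\cong\prod_i X^{\sigma_i}$, $\overline{\Res L}\cong\boxtimes_i L^{\sigma_i}$ over $\Ebar$, use $\omega_{\prod Y_i}\cong\boxtimes_i\omega_{Y_i}$ for (i), and use K\"unneth plus the section-growth characterisation of bigness for (ii). The only divergence is the ampleness step, where the paper passes through very ampleness via the Segre embedding while you use the external-product-of-amples fact with the converse by restriction to fibres; both are standard and equally valid.
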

\begin{proof}
In what follows, we identify $\overline{\Res X}$ with $\prod_{i=1}^d X^{\sigma_i}$ and $\overline{\Res L}$ with $\boxtimes_{i=1}^d L^{\sigma_i}$.
To prove the first part of the lemma, we note that given non-singular varieties $X_j$ for $j=1,2$, we have $\omega_{X_1\times X_2}\cong\omega_{X_1} \boxtimes \omega_{X_2}$ \cite[Ex. II.8.3]{Har77}. Therefore, we see that the canonical line bundle of $\prod_{i=1}^d X^{\sigma_i}$ is isomorphic to $\boxtimes_{i=1}^d \omega_{X^{\sigma_i}}$ and (\ref{item:canonical}) follows.

By the K\"{u}nneth formula for coherent cohomology \cite{SW59} and flat base change \cite[Prop.~III.9.3]{Har77}, we have
$$h^0(\Res X,\Res L) = h^0(X,L)^d.$$
From this, we see that $h^0(\Res X,\Res L)\neq 0$ if and only if $h^0(X,L) \neq0$, i.e. $L$ is effective if and only if $\Res L$ is effective. Similarly, as the property of being big can be defined in terms of the size of the space of global sections \cite[Lem. 2.2.3]{Lar07}, it follows that $L$ is big if and only if $\Res L$ is big.

Next, let $\varphi:X \dashrightarrow \PP_F^n$ be a rational map associated to $L$. Then, a rational map associated to $\Res L$ may be given by the composition of $\prod_{i=1}^d \varphi^{\sigma_i}$ with the Segre embedding, on choosing isomorphisms $(\PP_{\Ebar}^n)^{\sigma_i} \cong \PP_{\Ebar}^n$. As this map is an embedding if and only if $\varphi$ is an embedding, we see that $L$ is very ample if and only if $\Res L$ is very ample, and therefore that $L$ is ample if and only if $\Res L$ is ample. This proves (\ref{item:positivity}).
\end{proof}

We now study the relationship between $\Pic X$ and $\Pic \Res X$. For this we shall often use the following well-known result.

\begin{lemma}\label{lem:CMPic}
	Let $X$ be a proper variety over $F$. If $X(F) \neq \emptyset$ then
	the natural map
	$$\Pic X \to (\Pic \Xbar)^{G_F},$$
	is an isomorphism.
\end{lemma}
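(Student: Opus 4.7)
The plan is to derive the isomorphism from the low-degree terms of the Hochschild--Serre spectral sequence for étale cohomology of $\Gm$ associated with the Galois cover $\Xbar \to X$, namely
\begin{equation*}
  H^p(G_F, H^q(\Xbar, \Gm)) \Longrightarrow H^{p+q}(X, \Gm).
\end{equation*}
The resulting five-term exact sequence reads
\begin{equation*}
  0 \to H^1(G_F, H^0(\Xbar, \Gm)) \to \Pic X \to (\Pic \Xbar)^{G_F} \to H^2(G_F, H^0(\Xbar, \Gm)) \to \Br X,
\end{equation*}
using the standard identification $H^1(Y_{\text{ét}}, \Gm) = \Pic Y$.

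First I would identify $H^0(\Xbar, \Gm) = \Fbar^{\times}$. This uses that $X$ is proper and geometrically integral, so $H^0(\Xbar, \mathcal{O}_{\Xbar}) = \Fbar$ and hence the global units are $\Fbar^{\times}$. Then Hilbert 90 gives $H^1(G_F, \Fbar^{\times}) = 0$, which already establishes injectivity of $\Pic X \to (\Pic \Xbar)^{G_F}$.

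For surjectivity, I would exploit the hypothesis $X(F) \neq \emptyset$. A point $x \in X(F)$ corresponds to a morphism $\Spec F \to X$ whose composition with the structure morphism $X \to \Spec F$ is the identity; applying $H^2(-, \Gm)$ yields a commutative diagram in which the pullback $\Br X \to \Br F$ is a retraction of the structure map $\Br F \to \Br X$. Consequently the map $H^2(G_F, \Fbar^{\times}) = \Br F \to \Br X$ is (split) injective, which forces the connecting homomorphism $(\Pic \Xbar)^{G_F} \to H^2(G_F, \Fbar^{\times})$ in the five-term sequence to vanish. Combined with the injectivity established above, this gives the desired isomorphism.

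The step that deserves the most care is the splitting argument: one must verify that the rational point actually induces a retraction on $\Br$ compatible with the edge map of the spectral sequence, so that the composite $\Br F \to \Br X \to \Br F$ is the identity. Everything else (existence of the spectral sequence, global units on a proper geometrically integral variety, Hilbert 90) is entirely standard.
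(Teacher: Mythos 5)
Your argument is correct and is essentially the same as the one the paper relies on: the paper simply cites \cite[Cor. 1.3]{CM96}, whose proof is exactly this Hochschild--Serre computation (injectivity from $H^1(G_F,\Fbar^\times)=0$ after identifying $H^0(\Xbar,\Gm)=\Fbar^\times$ via properness and geometric integrality, surjectivity from the retraction on Brauer groups induced by the rational point). The one step you flag --- compatibility of the edge map $H^2(G_F,\Fbar^\times)\to H^2(X,\Gm)$ with pullback along the structure morphism --- is indeed the only point needing verification, and it holds by functoriality of the spectral sequence.
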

\begin{proof}
	See \cite{CM96}, in particular \cite[Cor.~1.3]{CM96}.
\end{proof}

Throughout this paper we will often assume that our varieties have rational points in order to apply this lemma. The existence of a rational point will also be
crucial when we show that Tamagawa measures are preserved under the Weil restriction (see Lemma \ref{lem:cotangent}).

\begin{lemma} \label{lem:Pic0}
	Let $X$ be a smooth projective variety over $F$ such that $X(F) \neq \emptyset$.
	Then the map $\Res: \Pic^0 X \to \Pic^0 \Res X$ is an isomorphism.
\end{lemma}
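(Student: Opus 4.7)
The plan is as follows. The map $\Res\colon \Pic^0 X \to \Pic^0 \Res X$ has already been shown to be injective in the discussion after (\ref{def:Weil_Res_Line_bundle}), so only surjectivity requires attention. First I would invoke Lemma~\ref{lem:CMPic}: since $X(F)\neq\emptyset$, the rational point transports to $(\Res X)(E)=X(F)\neq\emptyset$, and hence $\Pic^0 X = (\Pic^0 \Xbar)^{G_F}$ and $\Pic^0 \Res X = (\Pic^0 \overline{\Res X})^{G_E}$. This reduces the claim to a statement about Galois invariants.

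Next I would unpack the right-hand side using the isomorphism $P\colon \overline{\Res X} \xrightarrow{\sim} \prod_{i=1}^d X^{\sigma_i}$ and the compatibility $\overline{\Res L} \cong \boxtimes_{i=1}^d L^{\sigma_i}$ recorded after (\ref{def:Weil_Res_Line_bundle}). Since $\Pic^0$ of a product of smooth projective varieties (each equipped with a rational point) splits as the direct sum of the $\Pic^0$'s of the factors---for instance by duality with the Albanese, which is visibly additive over products---one obtains $\Pic^0 \overline{\Res X} \cong \bigoplus_{i=1}^d \Pic^0(X^{\sigma_i})$, and under this identification the map on geometric $\Pic^0$'s induced by $\Res$ sends $\Lbar \in \Pic^0 \Xbar$ to the tuple $(L^{\sigma_i})_{i=1}^d$.

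Finally, $G_E$ permutes $\sigma_1,\dots,\sigma_d$ transitively with stabiliser $G_F$ (on normalising $\sigma_1=\mathrm{id}$), so the displayed direct sum is the induced $G_E$-module $\mathrm{Ind}_{G_F}^{G_E}\Pic^0 \Xbar$. Shapiro's lemma---or equivalently the direct verification that a $G_E$-invariant tuple $(M_i)$ is determined by $M_1$, which itself must be $G_F$-invariant---identifies its $G_E$-invariants with $(\Pic^0 \Xbar)^{G_F}$ in a manner compatible with the map induced by $\Res$, whence surjectivity. The only non-formal ingredient is the splitting of $\Pic^0$ under products of smooth projective varieties with a rational point; all other steps are bookkeeping with the Galois action, so this is the step I would expect to require the most care in a full write-up.
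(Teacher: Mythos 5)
Your proposal is correct and follows essentially the same route as the paper: identify $\Pic^0\:\overline{\Res X}$ with $\bigoplus_{i=1}^d \Pic^0 X^{\sigma_i}$ via the product decomposition over $\Ebar$, recognise this as the induced $G_E$-module, apply Shapiro's lemma to the invariants, and use Lemma \ref{lem:CMPic} (valid since $X(F)\neq\emptyset$ and hence $\Res X(E)\neq\emptyset$) to descend to $\Pic^0$ over the ground fields. The only cosmetic difference is that the paper cites a reference for the splitting of $\Pic^0$ under products rather than invoking Albanese duality.
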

\begin{proof}
	We again identify $\overline{\Res X}$ with $\prod_{i=1}^d X^{\sigma_i}$ and $\overline{\Res L}$ with $\boxtimes_{i=1}^d L^{\sigma_i}$ for any line bundle $L$ on $X$.
  	First, it is well-known that for smooth projective varieties $X_1$ and $X_2$ over an algebraically closed field the natural map $\Pic^0 X_1 \bigoplus \Pic^0 X_2 \to \Pic^0(X_1 \times X_2)$
	is an isomorphism (see e.g. \cite[Prop.~A.4]{Diem01}). In particular the map
    \begin{align*}
        &\bigoplus_{i=1}^d \Pic^0 X^{\sigma_i} \to \Pic^0 \left(\prod_{i=1}^d X^{\sigma_i}\right)\\
        &(L_1^{\sigma_1},\ldots,L_d^{\sigma_d}) \mapsto \boxtimes_{i=1}^d L_i^{\sigma_i},
    \end{align*}
	is an isomorphism. 	This map is obviously a homomorphism of $G_F$-modules and thus shows that $\Pic^0 \:\overline{\Res X}$ 
	is the representation induced from the action of $G_F$ on
	$\Pic^0 \Xbar$. Next, by Shapiro's Lemma \cite[Prop.~1.6.3]{NSW00} we see that $(\Pic^0 \: \overline{\Res X})^{G_E} = (\Pic^0 \Xbar)^{G_F}$. 
	As $X(F) \neq \emptyset$ and $\Res X(E) \neq  \emptyset$, we have the equalities $\Pic^0 X = (\Pic^0 \Xbar)^{G_F}$
	and $\Pic^0 \Res X=(\Pic^0 \: \overline{\Res X})^{G_E}$ by Lemma \ref{lem:CMPic}, and the result follows.
\end{proof}

\begin{example}
    We sketch an example which shows that the map $\Res: \Pic X \to \Pic \Res X$ 
    need not be an isomorphism in general.
	If $E \subset F$ has degree two and $C$ is an elliptic curve over $E$, then $\Res C_F$ is isogenous
    to $C \times C'$ over $E$, where $C'$ denotes the quadratic twist of $C$ with respect to
    $E \subset F$. In particular, the pull-back of $C \times {0}$ and ${0} \times C'$ give two linearly independent curves in $\NS(\Res C_F)$.
     Thus clearly $\Pic C \not \cong \Pic \Res C$; indeed $C$ has Picard number one whereas $\Res C_F$ has Picard number at least two.
\end{example}

However in the case where $X$ is Fano, or more generally when $H^1(X, \mathcal{O}_X)=0$, the map $\Res: \Pic X \to \Pic \Res X$ \emph{is} an isomorphism as soon as there is a rational point.
\begin{lemma}\label{lem:Picard}
	Let $X$ be a projective variety over $F$
	such that $H^1(X, \mathcal{O}_X)=0$. Then
	\begin{enumerate}
    		\item $H^1(\Res X, \mathcal{O}_{\Res X})=0$. \label{h^1_structure_sheaf}
	    \item There is an isomorphism $\Pic \Xbar \otimes_{G_F} G_E  \cong \Pic \: \overline{\Res X}$ of Galois modules. i.e. $\Pic \:\overline{\Res X}$ is the representation induced from the action of $G_F$ on 			$\Pic \Xbar$. \label{Picinduced}
	\end{enumerate}
	If in addition $X(F) \neq \emptyset$, then
	\begin{enumerate}[resume]
	    \item The map $\Res: \Pic X \to \Pic \Res X$ is an isomorphism. \label{Picisom}
	    \item The induced map on effective cones $\Res: \Eff(X) \to \Eff(\Res X)$ is an isomorphism. \label{Effisom}
	\end{enumerate}
\end{lemma}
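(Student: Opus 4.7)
The plan is to bootstrap from cohomological vanishing, through the structure of the Picard group, to the effective cone, using the identification $\overline{\Res X} \cong \prod_{i=1}^d X^{\sigma_i}$ (on which $G_E$ acts by permuting factors via its action on the embeddings $\sigma_i$) together with the K\"unneth formula and Shapiro's lemma.

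For part (\ref{h^1_structure_sheaf}), I would apply flat base change \cite[Prop. III.9.3]{Har77} to reduce to computing $H^1(\overline{\Res X}, \mathcal{O})$, then use the K\"unneth formula \cite{SW59} and the product decomposition to split this as $\bigoplus_{i} H^1(X^{\sigma_i}, \mathcal{O})$. A second application of flat base change to the hypothesis $H^1(X, \mathcal{O}_X) = 0$ makes each summand vanish.

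For (\ref{Picinduced}), part (\ref{h^1_structure_sheaf}) combined with the identification of the tangent space of $\Pic^0 \Xbar$ with $H^1(\Xbar, \mathcal{O}_{\Xbar})$ implies $\Pic^0 \Xbar = 0$, and similarly $\Pic^0 X^{\sigma_i} = 0$ for each $i$. Iterating the seesaw argument recalled in the proof of Lemma \ref{lem:Pic0} then yields an isomorphism
$$\bigoplus_{i=1}^d \Pic X^{\sigma_i} \xrightarrow{\sim} \Pic\Bigl(\prod_{i=1}^d X^{\sigma_i}\Bigr) \cong \Pic \overline{\Res X},\qquad (L_i^{\sigma_i}) \mapsto \boxtimes_{i=1}^d L_i^{\sigma_i}.$$
Tracking the $G_E$-action, which permutes the factors through its action on the embeddings $\{\sigma_i\}$ while the stabilizer of a chosen $\sigma_i$ acts through $G_F$ on the corresponding summand, exhibits the right-hand side as the induced representation $\Pic \Xbar \otimes_{G_F} G_E$.

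Parts (\ref{Picisom}) and (\ref{Effisom}) then follow rather formally. Shapiro's lemma \cite[Prop. 1.6.3]{NSW00} applied to (\ref{Picinduced}) gives $(\Pic \overline{\Res X})^{G_E} = (\Pic \Xbar)^{G_F}$; since $X(F) \neq \emptyset$ forces $\Res X(E) \neq \emptyset$ by functoriality, Lemma \ref{lem:CMPic} identifies these fixed-point groups with $\Pic X$ and $\Pic \Res X$ respectively, and unwinding the construction shows the resulting isomorphism is induced by $\Res$. For (\ref{Effisom}), the $\RR$-linear extension of (\ref{Picisom}) is an isomorphism $\NS_\RR X \cong \NS_\RR \Res X$ which, by Lemma \ref{lem:line_bundles}(\ref{item:positivity}), sends effective classes bijectively to effective classes; taking $\RR_{\geq 0}$-spans and closures transfers this to an isomorphism of effective cones. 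The main obstacle is the bookkeeping in (\ref{Picinduced}): verifying that the $G_E$-action on $\Pic \overline{\Res X}$ inherited from the descent datum defining $\Res X$ matches the one exhibiting it as an induced representation, rather than some abstractly isomorphic but Galois-inequivalent module.
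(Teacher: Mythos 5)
Your proposal is correct and follows essentially the same route as the paper: K\"unneth plus flat base change for part (\ref{h^1_structure_sheaf}), the splitting $\bigoplus_i \Pic X^{\sigma_i} \cong \Pic(\prod_i X^{\sigma_i})$ as induced $G_E$-module for part (\ref{Picinduced}), and Shapiro's lemma together with Lemma \ref{lem:CMPic} for parts (\ref{Picisom}) and (\ref{Effisom}). The only cosmetic difference is that you derive the product-Picard splitting from $\Pic^0\Xbar = 0$ and the seesaw theorem, where the paper simply cites \cite[Ex. III.12.6]{Har77}, which encapsulates the same argument.
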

\begin{proof}
    In what follows we identify $\overline{\Res X}$ with $\prod_{i=1}^d X^{\sigma_i}$ and $\overline{\Res L}$
	with $\boxtimes_{i=1}^d L^{\sigma_i}$ for any line bundle $L$ on $X$.
    As in the proof of Lemma \ref{lem:line_bundles}, we see that the K\"{u}nneth formula for coherent sheaves and flat base change imply
    that $h^1(\Res X, \mathcal{O}_{\Res X})=h^1(X, \mathcal{O}_{X})^d=0$, thus proving (\ref{h^1_structure_sheaf}).
    As $H^1(X, \mathcal{O}_X)=0$, it follows from \cite[Ex. III.12.6]{Har77} that the map
    \begin{align*}
        &\bigoplus_{i=1}^d \Pic X^{\sigma_i} \to \Pic \left(\prod_{i=1}^d X^{\sigma_i}\right)\\
        &(L_1^{\sigma_1},\ldots,L_d^{\sigma_d}) \mapsto \boxtimes_{i=1}^d L_i^{\sigma_i},
    \end{align*}
    is an isomorphism. This map is obviously a homomorphism of $G_F$-modules and thus proves (\ref{Picinduced}). As in the proof of Lemma \ref{lem:Pic0}, the fact that
	$X(F) \neq \emptyset$ implies (\ref{Picisom}). Finally note that $\Pic^0 X = 0$ by \cite[Thm.~8.4.1]{BLR90} as $H^1(X, \mathcal{O}_X)=0$.
	Therefore $\NS X = \Pic X$, and so (\ref{Effisom}) follows from (\ref{Picisom}), Lemma \ref{lem:line_bundles} and  Lemma \ref{lem:Pic0}
\end{proof}

\section{Adelically metrised line bundles} \label{Sec:adelic}
The aim of this section is to define the Weil restriction of an adelically metrised line bundle. We begin by recalling
various facts about height functions and adelically metrised line bundles, which can be found for example in \cite[Sec. 2]{CLT10a} or \cite[Sec. 2]{Pey03}.
Throughout this section $F$ is a number field.

\subsection{Definitions and basic properties}

\begin{definition}
	Let $(X,L)$ be a variety over $F$ together with a line bundle. For a place $v \in \Val(F)$,
    a $v$-adic metric on $L$ is a map which associates to every point $x_v \in X(F_v)$ a function $||\cdot||_v:L(x_v) \mapsto \RR_{\geq 0}$ on the fibre of $L$ above $x_v$ such that
	\begin{enumerate}
    		\item For all $\ell \in L(x_v)$, we have $||\ell||_v =0$ if and only if $\ell=0$.
    		\item For all $\lambda \in F_v$ and $\ell \in L(x_v)$, we have $||\lambda \ell||_v=|\lambda|_v ||\ell||_v$.
	    \item For any open subset $U \subset X$ and any local section $s \in \Gamma(U,L)$, the function given by $x_v \mapsto ||s(x_v)||_v$ is continuous in the $v$-adic topology.
	\end{enumerate}
\end{definition}
For any non-archimedean place $v$ of $F$, there is a natural way to associate 
a $v$-adic metric on $L$ to any model $(\mathscr{X},\mathscr{L})$ of $(X,L)$ over $\mathcal{O}_{F_v}$
(see \cite[Sec. 2.1.5]{CLT10a} or \cite[Ex. 2.2]{Pey03}).

\begin{definition}
	Let $(X,L)$ be a projective variety over $F$ together with a line bundle. An adelic metric on $L$ is a collection
	$||\cdot||=\{||\cdot||_v\}_{v \in \Val(F)}$ of $v$-adic metrics for each place $v \in \Val(F)$, such that all
	but finitely many of the $||\cdot||_v$ are defined by a single model of $(X,L)$ over $\mathcal{O}_F$.
	We denote the associated adelically metrised line bundle by $\LL=(L,||\cdot||)$.
\end{definition}

To any such pair $(X,\LL)$ one may associate a height function $H_{\LL}$. Namely,
for any $x \in X(F)$ define 
\begin{equation} \label{def:height}
	H_{\LL}(x) = \prod_{v \in \Val(F)}||s(x)||^{-1}_v,
\end{equation}
where $s$ is any local section of $L$ which is defined and non-zero at $x \in X(F)$. The fact that this definition is independent of $s$ follows from the product formula.

\begin{example}
	For each set of generating global sections $s_0,\ldots,s_n$ of $\mathcal{O}_{\PP^n}(1)$, 
	there is an adelic metric on $\mathcal{O}_{\PP^n}(1)$
such that for any local section $s$ of $\mathcal{O}_{\PP^n}(1)$ which is non-zero at $x_v \in \PP^n(F_v)$, the $v$-adic metric is given by
\begin{equation} \label{eqn:proj_metric}
	||s(x_v)||_v = \left( \max_{0 \leq i \leq n} \left|\frac{s_i(x_v)}{s(x_v)}\right|_v\right)^{-1}.
\end{equation}
If one takes $s_i=x_i$ for each $i=0,\ldots,n$, then the associated height function is exactly (\ref{eq:height}). One obtains other metrisations of $\mathcal{O}_{\PP^n}(1)$ by allowing
arbitrary $F_v$-vector space norms for any finite collection of places $v$ of $F$ in (\ref{eqn:proj_metric}),
instead of the usual maximum norm.
\end{example}

Many natural operations on line bundles can also be performed on adelically metrised 
line bundles. For example, given a morphism of projective varieties $f:Y \to X$
and an adelically metrised line bundle $\LL$ on $X$, there is a pull-back 
adelically metrised line bundle $f^*\LL$ on $Y$, for which we have an equality
$$H_{f^*\LL}(y) = H_{\LL}(f(y)),$$ of heights for all $y \in Y(F)$.
One may similarly define the dual and tensor product of adelically metrised line bundles
in a natural way, whose height functions satisfy the obvious relations (see \cite[Sec. 2.1.3]{CLT10a}).

We say that two adelically metrised line bundles $\LL_1$ and $\LL_2$ on a projective variety $X$ are \emph{isometric} if there exists an isomorphism of line bundles $\varphi:L_1 \to L_2$
and constants $\lambda_v \in \RR_{>0}$ for each $v \in \Val(F)$ such that $\prod_{v \in \Val(F)}\lambda_v =1$ and with the property that
for all $x_v \in X(F_v)$ and all local sections $s$ of $L_2$ defined at $x_v$ we have
$$||\varphi^* s (x_v)||_{1,v}=\lambda_v||s(x_v)||_{2,v}.$$
It is simple to see that isometric adelically metrised line bundles give rise to the same height function.
As an example of an isometry, note that since $X$ is projective any automorphism of a line bundle is given by multiplication by an element of $F^*$. It follows from the product formula that such a map is an isometry.

\subsection{The Weil restriction of an adelically metrised line bundle}
\label{Subsec:Weil_adelic_metric}
Throughout this section $E \subset F$ is an extension of number fields. Recall (\ref{def:Weil_Res_Line_bundle}) that given a projective variety $X$ over $F$ and
a line bundle $L$ on $X$, we defined $\Res L = \Norm{F}{E}(p^* L)$. In order to extend this definition to adelically metrised line bundles,
it suffices to define the norm of an adelically metrised line bundle.

\begin{example}
    \label{Ex:norms}
    Given a projective variety $Y$ over $E$ and an adelically metrised line bundle $\LL$ on $Y_F$, we shall now define the norm $\Norm{F}{E}(\LL)=(\Norm{F}{E}(L),||\cdot||)$ of $\LL$.
    For any place $v \in \Val(E)$ and any $y_v \in Y(E_v)$ there is a unique $v$-adic norm on $\Norm{F}{E}(L)$ such that
    \begin{equation} \label{def:norm_adelic_line_bundle}
    	||\Norm{F}{E} (s) (y_v)||_v = \prod_{w|v} ||s(y_v)||_w,
    \end{equation}
    for any local section $s$ of $L$ defined at $y_v$. Verifying that these define $v$-adic metrics
    is elementary. To see that these indeed come from a model for all but finitely many places,
    choose a finite set of places $S \subset \Val(E)$ containing all archimedean places and let $T \subset \Val(F)$  be the places of $F$ lying above those
	in  $S$. Let $(\mathscr{Y}',\mathscr{L})$ be a model of $(Y_F,L)$ over $\mathcal{O}_{F,T}$ and let $\mathscr{Y}$ be a model
	of $Y$ over $\mathcal{O}_{E,S}$. Taking $S$ sufficiently large, we may assume that the identity map $Y_F \to Y_F$ extends to an
	isomorphism $\mathscr{Y}' \cong \mathscr{Y}_{\mathcal{O}_{F,T}}$. Hence by taking the norm
	of $\mathscr{L}$ we obtain a model $(\mathscr{Y},\Norm{\mathcal{O}_{F,T}}{\mathcal{O}_{E,S}}(\mathscr{L}))$ of $(Y,\Norm{F}{E}(L))$
	over $\mathcal{O}_{E,S}$. In particular, we see that the above $v$-adic metrics (\ref{def:norm_adelic_line_bundle})
	do indeed come from a model for all but finitely many places.

    As for the height functions, one has $H_{\Norm{F}{E}\LL}(y)=H_{\LL}(y)$ for all $y \in Y(E)$.
    Indeed, choose a local section $s$ of $L$ which is defined and non-zero at $y$. Then by (\ref{def:norm_adelic_line_bundle}) we have
    \begin{align*} 
    	H_{\Norm{F}{E}(\LL)}(y)  = \prod_{v \in \Val(E)}||\Norm{F}{E}(s) (y)||_v^{-1}
    	 = \prod_{v \in \Val(E)} \prod_{w|v} ||s(y)||_w^{-1}
    	 = H_{\LL}(y),
    \end{align*} as required.
\end{example}

We therefore define an adelic metric on $\Res L$ by $\Res \LL = \Norm{F}{E}(p^* \LL)$.
The following lemma relates the height function $H_{\LL}$ to $H_{\Res \LL}$ and establishes (\ref{eqn:Weil_counting_functions}).

\begin{lemma} \label{lem:heights}
	Let $X$ be a projective variety over $F$ together with an adelically metrised line bundle $\LL$.
	Then we have $H_{\Res \LL}(x) = H_{\LL}(p(x))$
    for all $x \in \Res X(E)$. In particular,
    $$N(\LL,U,B)=N(\Res \LL,\Res U,B),$$
    for any open subset $U \subset X$ and any $B > 0$.
\end{lemma}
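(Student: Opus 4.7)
The proof will be essentially a chain of two height identities, one coming from the definition of $\Res \LL$ as a norm and one coming from the pull-back formula.

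First I would unpack the definition: since $\Res \LL = \Norm{F}{E}(p^*\LL)$ by construction, and $p : \Res X \times_E F \to X$ is defined over $F$, the adelically metrised line bundle $p^*\LL$ lives on $(\Res X)_F$. A point $x \in \Res X(E)$ base-changes to an $F$-point $x_F \in (\Res X)_F(F)$, and under the adjunction identification $\Res X(E) \cong X(F)$, one has $p(x_F) \in X(F)$ corresponding to $x$. So the two sides of the desired equality $H_{\Res \LL}(x) = H_\LL(p(x))$ are both already well-defined.

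Next I would apply the identity (\ref{eqn:norm_height}) from the Example describing the norm of an adelically metrised line bundle, applied to $Y = \Res X$ and the line bundle $p^*\LL$ on $Y_F$. This gives
\[
H_{\Res \LL}(x) \;=\; H_{\Norm{F}{E}(p^*\LL)}(x) \;=\; H_{p^*\LL}(x_F)
\]
for every $x \in \Res X(E)$, where the second equality is exactly the content of the product-over-places-above-$v$ computation carried out in (\ref{eqn:norm_height}). Then I would apply the pull-back identity for heights recorded in Example \ref{Ex:pull-backs}, which gives $H_{p^*\LL}(x_F) = H_{\LL}(p(x_F))$. Chaining these two yields $H_{\Res \LL}(x) = H_\LL(p(x))$, which is the first assertion.

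For the second assertion I would simply observe that $p$ induces a bijection $\Res U(E) \xrightarrow{\sim} U(F)$ (because the Weil restriction functor preserves open immersions and satisfies the adjunction $\Res U(E)=U(F)$), and the first part shows this bijection transports the condition $H_{\Res \LL}(x) \leq B$ to $H_\LL(p(x)) \leq B$. Counting on each side therefore gives $N(\Res \LL, \Res U, B) = N(\LL, U, B)$ for every $B > 0$. I do not anticipate any real obstacle here: the entire content of the lemma is to verify that the definitions chosen in (\ref{def:Weil_Res_Line_bundle}) and (\ref{def:norm_adelic_line_bundle}) have been arranged precisely so that the product-formula bookkeeping between places of $F$ and places of $E$ collapses correctly; the only care needed is to keep straight which object is defined over $E$ and which over $F$ when invoking the pull-back formula.
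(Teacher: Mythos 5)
Your proposal is correct and follows exactly the paper's own argument: chain the norm-height identity (\ref{eqn:norm_height}) with the pull-back identity of Example \ref{Ex:pull-backs} to get $H_{\Res \LL}(x)=H_{\Norm{F}{E}(p^*\LL)}(x)=H_{p^*\LL}(x)=H_{\LL}(p(x))$, then deduce the equality of counting functions from the bijection $p:\Res U(E)\to U(F)$. Your extra care in tracking which objects live over $E$ versus $F$ is a welcome clarification but does not change the argument.
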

\begin{proof}
	It follows from immediately from the construction that
	$$H_{\Res \LL}(x)=H_{\Norm{F}{E}(p^*\LL)}(x)=H_{p^* \LL}(x)=H_{\LL}(p(x)),$$
	as required.
	The equality of counting functions follows from the fact that by definition, we have a bijection
	$p:\Res U (E) \to U(F)$
    induced by $p$.
\end{proof}

\section{Manin's conjectures}\label{Sec:Manin}
In this section we prove Theorem \ref{thm:Main}, Theorem \ref{thm:CI_Weil} and Theorem \ref{thm:counterexample}.
We also show that Peyre's refined conjecture
on the leading constant in the asymptotic formula is well-behaved under the Weil restriction (see Lemma \ref{lem:alpha_beta} and Theorem \ref{thm:Tamagawa}). Throughout this section $E \subset F$ is a finite extension of perfect fields of degree $d$ (assumed to be number fields 
from Section \ref{sec:Tamagawa} onwards).

\subsection{Proof of Theorem \ref{thm:Main}}
Let $(X,L)$ be a non-singular projective variety over $F$ together with a big line bundle such that $X(F) \neq \emptyset$.
In the light of Lemma~\ref{lem:heights}, to prove Theorem \ref{thm:Main} it suffices to show that the equalities
$a(L)=a(\Res L)$ and $b(L)=b(\Res L)$ hold. Here $a(L)$ is given by (\ref{def:a(L)}) and $b(L)$ is the codimension of the minimal face of $\partial\Eff(X)$ which contains $a(L)[D] + [\omega_X]$. Note that one needs to assume that $a(L)[D] + [\omega_X]$ belongs to the polyhedral part of $\partial\Eff(X)$ for $b(L)$ to be well-defined.
\begin{lemma}
    We have
    $$a(L)=a(\Res L).$$
    If moreover $[\omega_X]$ is not effective and $H^1(X,\mathcal{O}_X)=0$, then $b(L)$ is defined
    if and only if $b(\Res L)$ is defined.
    In which case we have
    $$b(L)=b(\Res L).$$
\end{lemma}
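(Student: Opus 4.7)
The plan is to move the effectivity conditions in the definitions of $a$ and $b$ across the Weil-restriction homomorphism $\Res : \NS_{\RR} X \to \NS_{\RR} \Res X$, using two core compatibilities: $\Res [\omega_X] = [\omega_{\Res X}]$ from Lemma \ref{lem:line_bundles}(\ref{item:canonical}), and the preservation of effectiveness, bigness and ampleness by $\Res$ from Lemma \ref{lem:line_bundles}(\ref{item:positivity}).

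For the inequality $a(\Res L) \leq a(L)$, I observe that $\Res$ is $\RR$-linear and, by Lemma \ref{lem:line_bundles}(\ref{item:positivity}), maps classes of effective divisors into the effective cone of $\Res X$; continuity then yields $\Res(\Eff(X)) \subseteq \Eff(\Res X)$, and combining with the canonical-bundle identity shows that any rational $r$ with $r[L]+[\omega_X] \in \Eff(X)$ satisfies $r[\Res L] + [\omega_{\Res X}] \in \Eff(\Res X)$. For the reverse inequality direct pullback fails, because $\Res$ need not be surjective on Picard groups in general (compare the elliptic-curve example preceding Lemma \ref{lem:Picard}); instead I will pass through bigness. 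If $r[\Res L] + [\omega_{\Res X}] \in \Eff(\Res X)$, then $\Res L$ is big by Lemma \ref{lem:line_bundles}(\ref{item:positivity}), so the class $(r+\epsilon)[\Res L] + [\omega_{\Res X}]$ lies in the interior of $\Eff(\Res X)$ and is therefore big for every $\epsilon > 0$. Clearing denominators and applying Lemma \ref{lem:line_bundles}(\ref{item:positivity}) in reverse shows the corresponding integer class on $X$ is big, so $(r+\epsilon)[L]+[\omega_X]$ is a big $\QQ$-class; letting $\epsilon \to 0$ and invoking closedness of $\Eff(X)$ gives $r[L]+[\omega_X] \in \Eff(X)$.

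For the statement about $b(L)$, the extra hypotheses $[\omega_X]$ non-effective and $H^1(X,\mathcal{O}_X)=0$ make $\Res$ much better behaved. Lemma \ref{lem:Picard}(\ref{h^1_structure_sheaf}) gives $H^1(\Res X, \mathcal{O}_{\Res X})=0$, so $\Pic^0=0$ and $\NS=\Pic$ on both sides; Lemma \ref{lem:Picard}(\ref{Picisom}) then provides an isomorphism $\Res : \NS_{\RR} X \cong \NS_{\RR} \Res X$, which by Lemma \ref{lem:Picard}(\ref{Effisom}) restricts to an isomorphism of closed effective cones. Any such cone isomorphism preserves the boundary, the polyhedral part and codimensions of faces, so $a(L)[L]+[\omega_X]$ lies on the polyhedral part of $\partial \Eff(X)$ if and only if its image does on $\partial \Eff(\Res X)$, and in that case the minimal faces containing these classes have equal codimension, giving $b(L)=b(\Res L)$.

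The main obstacle is the backward direction of the $a$-equality, where one cannot simply invert $\Res$ because surjectivity on Picard groups fails in general; the route through bigness succeeds precisely because bigness is a numerical condition characterised by the open cone interior, and this condition is preserved by $\Res$ in both directions.
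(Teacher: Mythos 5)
Your proof is correct, and for the $b$-statement it coincides with the paper's (both invoke Lemma \ref{lem:Picard} to get an isomorphism of effective cones preserving the canonical class). The interesting divergence is in the reverse inequality $a(L)\leq a(\Res L)$. The paper does not pass through bigness at all: it uses the isomorphism $\Res:\Pic^0 X\to\Pic^0\Res X$ of Lemma \ref{lem:Pic0} together with the fact that $\Pic^L X$ is a torsor under $\Pic^0 X$, so that $\Pic^L X$ contains an effective line bundle if and only if $\Pic^{\Res L}\Res X$ does (via $h^0(\Res X,\Res M)=h^0(X,M)^d$), and concludes directly that $[D]\in\Eff(X)$ if and only if $\Res[D]\in\Eff(\Res X)$, whence the two defining sets of rationals agree. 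Your route instead perturbs $r[\Res L]+[\omega_{\Res X}]$ by $\epsilon[\Res L]$ into the interior of $\Eff(\Res X)$, uses that this interior is the big cone, transfers bigness back to $X$ via the two-way statement in Lemma \ref{lem:line_bundles}(\ref{item:positivity}) (plus the standard fact that bigness is a numerical property), and closes up with $\epsilon\to 0$. What each buys: the paper's argument detects effectivity of individual classes and so needs $X(F)\neq\emptyset$ (through Lemmas \ref{lem:CMPic} and \ref{lem:Pic0}) but is shorter; yours needs only Lemma \ref{lem:line_bundles} and the standing hypothesis that $L$ is big, and it deals more explicitly with the closure implicit in the definition of $\Eff(X)$ as a \emph{closed} cone --- a point the paper's "if and only if" for classes passes over rather quickly, since a class in $\Eff(\Res X)$ need not itself admit an effective representative. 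Both are valid; yours is marginally more robust on that closure issue and marginally less economical.
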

\begin{proof}
	Recall from Lemma \ref{lem:line_bundles} that the map $\Res : \Pic X \to \Pic \Res X$ preserves the canonical bundle and that a line bundle $L$ on $X$ is effective if and only if
    $\Res L$ is effective. Moreover as $\Pic^L X$ is a torsor for $\Pic^0 X$, it follows from Lemma \ref{lem:Pic0} that $\Pic^L X$ contains an effective line bundle if and only if $\Pic^{\Res L} \Res X$ does.
	Hence with respect to the induced injective linear map $\Res: \NS_{\RR} X \to \NS_{\RR} \Res X$, we see that $[L] \in \Eff(X)$ if and only if $\Res[L] \in \Eff(\Res X)$.
	It follows that
    \begin{align*}
        a(L)=&\inf \{ r \in \QQ : r[L] + [\omega_X] \in \Eff(X)\} \\
            =&\inf \{ r \in \QQ : \Res ( r[L] + [\omega_X]) \in \Eff(\Res X)\} \\
            =&\inf \{ r \in \QQ :   r[\Res L] + [\omega_{\Res X}] \in \Eff(\Res X)\} \\
            =&a(\Res L),
    \end{align*}
    as required. Now assume that $[\omega_X]$ is not effective and that $H^1(X,\mathcal{O}_X)=0$. Then by Lemma \ref{lem:Picard}, we see that we have an isomorphism of effective cones $\Res : \Eff(X) \to \Eff(\Res X)$ which preserves the canonical bundle. In particular as $a(L)=a(\Res L)$, we see that $a(L)[L] + [\omega_X]$ belongs to the polyhedral part of $\partial\Eff(X)$ if and only if the corresponding statement holds for $a(\Res L)[\Res L] + [\omega_{\Res X}]$, and moreover that $b(L)=b(\Res L)$.
\end{proof}
This completes the proof of Theorem \ref{thm:Main}.

\subsection{Peyre's constant}
In \cite{Pey95}, Peyre gave a refinement of Manin's original conjecture which predicts the form of the leading constant in the asymptotic formula (\ref{conj:2}) for Fano varieties. Namely, after fixing a choice of adelic metric on the anticanonical line bundle $\omega_X^{-1}$, he conjectured that the leading constant $c_{\omega_X^{-1}}$
should satisfy
$$c_{\omega_X^{-1}}=\alpha(X)\beta(X)\tau(X).$$
Here $\alpha(X)$ is defined to be
$$\alpha(X)= \frac{1}{(\rho - 1)!}\int_{\Eff(X)^{\vee}} e^{-\langle \omega_X^{-1},x\rangle} \mathrm{d}x,$$
where $\rho=\rank \Pic X$ and $\mathrm{d}x$ is the Haar measure on the dual vector space $(\Pic X \otimes_{\ZZ} \RR)^{\vee}$ normalised
so that $(\Pic X)^{\vee}$ has covolume $1$. Also $\beta(X)=\# H^1(G_F, \Pic \Xbar)$ and $\tau(X)$ is the ``Tamagawa number" of $X$ with respect to the choice of adelic metric on $\omega_X^{-1}$.
The main result of this section is that this refined conjecture is compatible with the Weil restriction, i.e. we have
an equality $c_{\omega_X^{-1}} = c_{\omega_{\Res X}^{-1}}$.
We begin with $\alpha(X)$ and $\beta(X)$.
\begin{lemma}\label{lem:alpha_beta}
    Let $X$ be a smooth projective variety over $F$ such that we have  $H^1(X,\mathcal{O}_X)=0$, $X(F)\neq \emptyset$
    and such that $\Pic X$ is a free abelian group of finite rank.
    Then
    $$\beta(X)=\beta(\Res X).$$
    If $\omega_X^{-1}$ is big then
    $$\alpha(X)=\alpha(\Res X).$$
\end{lemma}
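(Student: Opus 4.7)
The proof proposal has two distinct parts, one for each equality, and both are essentially formal consequences of the structural results on $\Pic$ and $\Eff$ established in Section \ref{Sec:Weil}.

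\smallskip

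\noindent\textbf{For $\beta(X) = \beta(\Res X)$:} The plan is to invoke Shapiro's lemma. By Lemma~\ref{lem:Picard}(\ref{Picinduced}), the hypothesis $H^1(X,\mathcal{O}_X)=0$ gives a $G_E$-equivariant isomorphism
$$
\Pic \:\overline{\Res X} \;\cong\; \operatorname{Ind}_{G_F}^{G_E}\Pic \Xbar.
$$
Shapiro's lemma \cite[Prop.~1.6.3]{NSW00} then yields a canonical isomorphism
$$
H^1\bigl(G_E, \Pic \:\overline{\Res X}\bigr) \;\cong\; H^1\bigl(G_F, \Pic \Xbar\bigr),
$$
from which $\beta(\Res X) = \beta(X)$ follows immediately.

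\smallskip

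\noindent\textbf{For $\alpha(X) = \alpha(\Res X)$:} The plan is to perform a change of variables in the defining integral using the isomorphism furnished by Lemma~\ref{lem:Picard}(\ref{Picisom}). Set $\varphi = \Res: \Pic X \xrightarrow{\sim} \Pic \Res X$, and extend $\varphi$ by scalars to an $\RR$-linear isomorphism of Picard groups. First I would note that by Lemma~\ref{lem:line_bundles}(\ref{item:canonical}) we have $\varphi([\omega_X^{-1}]) = [\omega_{\Res X}^{-1}]$, and by Lemma~\ref{lem:Picard}(\ref{Effisom}) we have $\varphi(\Eff(X)) = \Eff(\Res X)$; moreover $\omega_{\Res X}^{-1}$ is big by Lemma~\ref{lem:line_bundles}(\ref{item:positivity}), so the integral defining $\alpha(\Res X)$ converges. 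In particular $\rho(X) = \rho(\Res X)$, so the prefactors $1/(\rho-1)!$ agree. The dual map $\varphi^{\vee}$ is then a linear isomorphism from $(\Pic \Res X \otimes_{\ZZ}\RR)^{\vee}$ to $(\Pic X \otimes_{\ZZ}\RR)^{\vee}$ which sends the lattice $(\Pic \Res X)^{\vee}$ isomorphically onto $(\Pic X)^{\vee}$ and sends the dual cone $\Eff(\Res X)^{\vee}$ onto $\Eff(X)^{\vee}$. Since the reference Haar measure in the definition of $\alpha$ is normalised by requiring the dual lattice to have covolume $1$, the isomorphism $\varphi^{\vee}$ is measure-preserving. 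Combined with the adjunction identity $\langle \omega_{\Res X}^{-1},y\rangle = \langle \omega_X^{-1}, \varphi^{\vee}(y)\rangle$, the substitution $x = \varphi^{\vee}(y)$ gives
$$
\int_{\Eff(\Res X)^{\vee}} e^{-\langle \omega_{\Res X}^{-1},y\rangle}\,\mathrm{d}y
\;=\;
\int_{\Eff(X)^{\vee}} e^{-\langle \omega_X^{-1},x\rangle}\,\mathrm{d}x,
$$
which is exactly the equality $\alpha(\Res X) = \alpha(X)$.

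\smallskip

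\noindent\textbf{Main obstacle:} There is no substantial obstacle; all the nontrivial input (that $\Res$ is an isomorphism on Picard groups, preserves the canonical class, and identifies effective cones, together with the induced module description of $\Pic\:\overline{\Res X}$) has already been packaged into Lemmas~\ref{lem:line_bundles} and~\ref{lem:Picard}. The only point requiring any care is bookkeeping the measure normalisation, which reduces to the observation that $\varphi$ sends the full integral lattice $\Pic X$ onto $\Pic \Res X$.
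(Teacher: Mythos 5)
Your proposal is correct and follows the same route as the paper: Shapiro's lemma applied to the induced-module description of $\Pic\:\overline{\Res X}$ for $\beta$, and the lattice- and cone-preserving isomorphism $\Res:\Pic X\to\Pic\Res X$ (preserving the canonical class) for $\alpha$. The paper simply states that $\alpha$ is ``defined purely in terms of this data and its dual''; your explicit change of variables via $\varphi^{\vee}$ is exactly the bookkeeping behind that remark.
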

\begin{proof}
    By Lemma \ref{lem:Picard} we know that $\Pic \overline{\Res X}$ is an induced representation of $\Pic \Xbar$. 
    Therefore Shapiro's lemma \cite[Prop.~1.6.3]{NSW00} implies that we have an isomorphism 
    $H^1(G_F, \Pic \Xbar) \cong H^1(G_E, \Pic \overline{\Res X})$, hence  $\beta(X)=\beta(\Res X)$.

    Next assume that $\omega_X^{-1}$ is big (this assumption is needed to make sure that $\alpha(X)$ is well-defined).
    By Lemma \ref{lem:line_bundles} and Lemma \ref{lem:Picard} we have an 	
    isomorphism $\Res : \Pic X \otimes_\ZZ \RR \to \Pic \Res X \otimes_\ZZ \RR $  which preserves the canonical bundle
	and induces an isomorphism of the Picard lattices and effective cones. As $\alpha(X)$ is defined purely in terms of this data and its dual, we see that $\alpha(X)=\alpha(\Res X).$
\end{proof}

\subsubsection{Tamagawa numbers} \label{sec:Tamagawa}
We next address the Tamagawa numbers, so we assume that $E \subset F$ are number fields.
Weil \cite{Wei82} was the first to define Tamagawa numbers of \emph{linear algebraic groups}, and he also showed 
\cite[Thm.~2.3.2]{Wei82} that they are preserved under the Weil restriction for finite separable extensions of global fields.
Weil's proof was however lacking in certain details, and a complete proof for all linear algebraic groups, including the 
non-separable case, was given by Oesterl\'{e} \cite[Thm.~II.1.3]{Oes84}.

Throughout this section $X$ is a smooth projective variety of dimension $n$ over $F$ such that
\begin{equation} \label{eqn:Tamagawa_conditions}
    \begin{array}{ll}
	\Pic \Xbar \text{ is a free abelian group of finite rank}.\\
	H^1(X,\mathcal{O}_X)=H^2(X,\mathcal{O}_X)=0.
    \end{array}
\end{equation}
We also fix a choice of adelic metric on the canonical line bundle $\omega_X$. For example $X$ could be a Fano variety with
the adelic metric coming from a choice of anticanonical embedding. We now recall the definition of the Tamagawa measure on 
$X (\Adele_F)$ associated to this choice of adelic metric. Such measures were originally defined by
Peyre \cite{Pey95} in the case where $X$ is Fano, however his construction also works in the slightly more general setting of 
(\ref{eqn:Tamagawa_conditions}) (see \cite{CLT10a}). Note that we do not impose any positivity conditions
on $\omega_X$.

For inspiration with the definition to come, let $K$ be a local field and $V$ a $K$-vector space  of dimension $n$. Then a choice of norm $|| \cdot ||$ on $\det(V)=\bigwedge^n V$ determines a measure on $V$.
Indeed, choosing an isomorphism $\phi:V \cong K^n$ we have the measure
$$\frac{\phi^*(|\mathrm{d}x_{1}|\cdots|\mathrm{d}x_n|)}
{||\phi^*(e_{1}\wedge\ldots\wedge e_{n})||},$$
on $V$, where $e_1,\ldots,e_n$ are the standard basis vectors on $K^n$ and
$|\mathrm{d}x_{1}|\cdots|\mathrm{d}x_n|$ denotes the product of the Haar
measures on $K^n$. It is easy to see that this is independent of the choice of $\phi$. To define measures on varieties
over local fields, we essentially apply this
construction to the cotangent space of each point on the variety.
For each place $v$ of $F$, choose a differential form $\omega_v$ of top degree defined on some open subset $U_v \subset X(F_v)$.
Then in a choice of local coordinates $x_{v,1},\ldots,x_{v,n}$ we may write $\omega_v$ as
$$\omega_v = f(x_{v,1},\ldots,x_{v,n})\mathrm{d}x_{v,1}\wedge\ldots\wedge\mathrm{d}x_{v,n}.$$
We define the measure $|\omega_v|_v$ associated to $\omega_v$ to be
$$|\omega_v|_v = |f(x_{v,1},\ldots,x_{v,n})_v|_v|\mathrm{d}x_{v,1}|_v\cdots|\mathrm{d}x_{v,n}|_v.$$
This measure is independent of the choice of local coordinates, however it depends on the choice of $\omega_v$. We therefore
consider instead the measure on $U_v$ given by
$|\omega_v|_v/||\omega_v||_v,$
which is independent of $\omega_v$. By gluing these measures, we obtain a measure $\tau_{X,v}$ on $X(F_v)$.

The product of these measures does not converge in general, so we need to introduce convergence factors to get a measure on
$X(\Adele_F)$. Since $\Pic \Xbar$ is a free abelian group of finite rank and moreover is a $G_F$-module, we may define the corresponding
Artin L-function $L(s,\Pic \Xbar)$ as a product of local factors $L_v(s,\Pic \Xbar)$ for each finite place $v \in \Val(F)$ 
(see e.g. \cite[Sec. 5.13]{IK04}). This L-function is holomorphic on $\re s >1$ and admits a meromorphic continuation to 
$\CC$ with a pole of order $\rho=\rank \Pic X$ at $s=1$. For each place $v \in \Val(F)$ we define
$$ \lambda_v = \left \{
	\begin{array}{ll}
		L_v(1,\Pic \Xbar),& \quad v \text{ non-archimedean}, \\
		1,& \quad v \text{ archimedean}. \\
	\end{array}\right.$$	
The condition $H^1(X,\mathcal{O}_X)=H^2(X,\mathcal{O}_X)=0$ implies (see \cite[Thm.~1.1.1]{CLT10a}) that these are a family of ``convergence factors'', i.e.
the measure
\begin{equation} \label{def:Tamagawa}
	\tau_X = \mu_F^{-n}\lim_{s \to 1}( (s-1)^{\rho} L(s,\Pic \Xbar) ) \prod_{v \in \Val(F)} \lambda_v^{-1} \tau_{X,v},
\end{equation}
is a well-defined measure on $X(\Adele_F)$, called the \emph{Tamagawa measure} of $X$.
Here $\mu_F$ denotes the volume of $\Adele_F/F$ with respect to our choice of Haar measure
(Peyre \cite[Def.~2.1]{Pey95} includes instead a discriminant factor due to his specific choice of Haar measure). 
We define the \emph{Tamagawa number} $\tau(X)$ of $X$ to be
$$\tau(X)=\tau_X(\overline{X(F)}),$$
where $\overline{X (F)}$ denotes the closure of $X (F)$ in $X (\Adele_F)$ with respect to the adelic topology. 
This construction depends on the choice of the adelic metric on the canonical line bundle $\omega_X$, but is independent
of the choice of Haar measure on $\Adele_F$.

We now consider the associated Tamagawa measure on $\Res X$. In order to get an adelic metric on $\omega_{\Res X}$, we need to choose an isomorphism of line bundles
$\phi:\omega_{\Res X} \to \Res \omega_X$. As $\Res \omega_X$ comes equipped with an adelic metric coming from $\omega_X$, by transport of structures
we obtain an adelic metric on $\omega_{\Res X}$. Firstly note that this adelic metric is independent of the choice of $\phi$, up to isometry.
Indeed, the choice of the isomorphism $\phi$ depends only on the choice of a non-zero global section $\varphi$ of $\omega_{\Res X} \otimes  \Res \omega_X^{-1}$.
As $\Res X$ is projective, any other choice of global section must differ from $\varphi$ by a non-zero scalar, and hence defines an isometric adelic
metric on $\omega_{\Res X}$.
Next note that we deduce from (\ref{eqn:Tamagawa_conditions}) and Lemma \ref{lem:Picard}
that $\Pic \overline{\Res X}$ is free of finite rank and that $H^1(\Res X, \mathcal{O}_{\Res X})=0$. Also using the K\"{u}nneth formula as in the proof of
Lemma \ref{lem:Picard}, we find that $H^2(\Res X, \mathcal{O}_{\Res X})=0$. Hence $\Res X$ also
satisfies the conditions (\ref{eqn:Tamagawa_conditions}) and we have constructed a well-defined Tamagawa measure $\tau_{\Res X}$.
As isometric adelic metrics clearly give rise to the same Tamagawa measure, we see that $\tau_{\Res X}$ is independent of the choice of $\phi$.

Note that there is another natural way to define a measure on $\Res X(\Adele_E)$. 
Namely, we may simply pull-back the Tamagawa measure $\tau_X$ on $X(\Adele_F)$
via the homeomorphism $p:\Res X(\Adele_E) \to X(\Adele_F)$. Our main result of 
this section is that these two constructions coincide.

\begin{theorem} \label{thm:Tamagawa}
    Suppose that $X(F) \neq \emptyset$. Then we have $p^*\tau_X = \tau_{\Res X}$, i.e. the map
	$$p: \Res X (\Adele_E) \to X (\Adele_F),$$
	is an isomorphism of  measure spaces. In particular there is an equality
	$\tau(X)=\tau(\Res X)$
	of Tamagawa numbers.
\end{theorem}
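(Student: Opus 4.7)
The strategy is to verify the equality of measures $p^*\tau_X = \tau_{\Res X}$ factor by factor in the defining formula (\ref{def:Tamagawa}), and then deduce the equality of Tamagawa numbers from the fact that $p$ is a homeomorphism of adelic spaces sending $\Res X(E)$ bijectively to $X(F)$, hence sending $\overline{\Res X(E)}$ onto $\overline{X(F)}$.

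The heart of the argument is a local comparison at each place $v \in \Val(E)$. Using the factorisation $F \otimes_E E_v \cong \prod_{w \mid v} F_w$, one obtains a canonical homeomorphism $\Res X(E_v) \cong \prod_{w \mid v} X(F_w)$, and stringing these together over all $v$ recovers the adelic bijection $p$. I would compute $\tau_{\Res X,v}$ near a point $y_v$ corresponding to $(y_w)_{w \mid v}$ by taking a local top $F$-form $\omega$ on $X$ defined at each $y_w$ and using the norm form $\Res \omega$ as the corresponding top $E$-form on $\Res X$. The defining identity (\ref{def:norm_adelic_line_bundle}) then gives $||\Res \omega(y_v)||_v = \prod_{w \mid v} ||\omega(y_w)||_w$, while the tower identity $|a|_w = |N_{F_w/E_v}(a)|_v$ for $a \in F_w$ converts $v$-adic integration against $\Res \omega$ into $w$-adic integration against $\omega$ on each factor $X(F_w)$, up to an explicit local factor depending on the relative different $\mathfrak{d}_{F_w/E_v}$. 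Hence locally $\tau_{\Res X,v}$ agrees with $\prod_{w \mid v} \tau_{X,w}$ up to a discriminant contribution to be tracked globally.

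For the L-function factors, Lemma \ref{lem:Picard}(\ref{Picinduced}) identifies $\Pic \overline{\Res X}$ with the $G_E$-representation induced from the $G_F$-module $\Pic \Xbar$. The standard induction formula for Artin L-functions then supplies both the global equality $L(s,\Pic \overline{\Res X}) = L(s,\Pic \Xbar)$ (the pole orders at $s = 1$ match since $\rank \Pic X = \rank \Pic \Res X$ by Lemma \ref{lem:Picard}(\ref{Picisom})) and the local factorisation $L_v(s,\Pic \overline{\Res X}) = \prod_{w \mid v} L_w(s,\Pic \Xbar)$ at each non-archimedean place $v$ of $E$; consequently $\lambda_v = \prod_{w \mid v} \lambda_w$, and the two systems of convergence factors combine into the same global product indexed by $\Val(F)$.

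Assembling these pieces, the remaining discrepancy in the defining formula is between the scalar prefactors $\mu_E^{-nd}$ (with $nd = \dim \Res X$) and $\mu_F^{-n}$. The main technical obstacle is checking that this discrepancy is cancelled exactly by the global product of the local discriminant contributions left over from the local comparison; concretely, this is a form of the classical identity relating $\mu_F$ to $\mu_E^d \cdot |N_{E/\QQ}\mathfrak{d}_{F/E}|^{1/2}$, and is in the same spirit as the proof that Tamagawa numbers of linear algebraic groups are preserved under Weil restriction, due originally to Weil \cite{Wei82} and completed by Oesterl\'{e} \cite[Thm. II.1.3]{Oes84}. Once the global equality $p^*\tau_X = \tau_{\Res X}$ is established, the equality $\tau(X) = \tau(\Res X)$ follows immediately from the opening observation about adelic closures.
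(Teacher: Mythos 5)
Your argument is sound, but it follows the classical Weil--Oesterl\'{e} route rather than the one taken in the paper, and the difference is worth spelling out. You propose to compute the local comparison constant at \emph{every} place explicitly: identify $\Res X(E_v)\cong\prod_{w\mid v}X(F_w)$, convert $v$-adic integration against the norm form into the product of the $w$-adic integrations, and record at each place a leftover factor governed by the local different $\mathfrak{d}_{F_w/E_v}$; the global product of these factors is then cancelled against the mismatch $\mu_F^{-n}$ versus $\mu_E^{-nd}$ by the discriminant tower formula $\disc(F)=N_{E/\QQ}(\mathfrak{d}_{F/E})\disc(E)^{[F:E]}$. This works, but the step you flag as the ``main technical obstacle'' is genuinely the whole of the work: the exact local factor must be extracted from the canonical isomorphism $\det_{E_v}(\prod_{w\mid v}T^*_{x_w})\cong\bigotimes_{w\mid v}\Norm{F_w}{E_v}\det_{F_w}T^*_{x_w}$ twisted by the line $(\det_{E_v}(F\otimes_E E_v))^{\otimes n}$, including the ramified and archimedean places (where factors of $2$ from complex places of $F$ over real places of $E$ must be tracked against your normalisation of $\mu_F$). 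The paper deliberately sidesteps this bookkeeping: it only shows that at each place the two measures differ by \emph{some} constant $A_v$ (Lemma \ref{lem:local_measures}, by a Haar-measure uniqueness argument), shows $A_v=1$ for almost all $v$ by comparing point counts over residue fields (Lemma \ref{lem:A_v=1}), and then evaluates the global product $A=\prod_v A_v$ in one stroke by comparing covolumes of the adelic cotangent space at a single rational point via Lemma \ref{lem:cotangent}, using $H_{\Res\omega_X}(x)=H_{\omega_X}(p(x))$. What each approach buys: the paper's argument requires no discriminant computation at all but consumes the hypothesis $X(F)\neq\emptyset$ precisely at this step; your argument, if the local computation is carried out in full (it is, in effect, \cite[Lem. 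II.5.2]{Oes84} transplanted to this setting), would establish $p^*\tau_X=\tau_{\Res X}$ without any rational point, since the L-function comparison only needs Lemma \ref{lem:Picard}(\ref{Picinduced}). So your route is more laborious but slightly more general; as written, though, the decisive local computation is asserted by analogy rather than performed.
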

We begin the proof of the theorem by considering the L-functions and convergence factors.

\begin{lemma} \label{lem:Lfunctions}
	There is an equality
	$$L(s,\Pic \Xbar) = L(s,\Pic \overline{\Res X}),$$
	of L-functions and for any place $v \in \Val(E)$ an equality
	$ \lambda_v = \prod_{w|v} \lambda_w$
	of convergence factors.
\end{lemma}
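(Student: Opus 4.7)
The plan is to reduce everything to the fact that Artin $L$-functions are inductive, both globally and locally, combined with the induced-representation structure of $\Pic\overline{\Res X}$ established in Lemma~\ref{lem:Picard}. Since we are working under the hypotheses (\ref{eqn:Tamagawa_conditions}), in particular $H^1(X,\mathcal{O}_X)=0$, Lemma~\ref{lem:Picard}(\ref{Picinduced}) applies (the assumption $X(F)\neq\emptyset$ is not even required for that part), and gives an isomorphism $\Pic\overline{\Res X}\cong\mathrm{Ind}_{G_F}^{G_E}\Pic\Xbar$ of $G_E$-modules. Since $\Pic\Xbar$ is free of finite rank by (\ref{eqn:Tamagawa_conditions}), tensoring with $\CC$ yields a finite-dimensional continuous complex representation of $G_E$ factoring through a finite quotient, so the theory of Artin $L$-functions applies verbatim.

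For the global equality, I would invoke the standard inductivity property of Artin $L$-functions: for any continuous finite-dimensional representation $\rho$ of $G_F$, one has $L(s,\mathrm{Ind}_{G_F}^{G_E}\rho)=L(s,\rho)$ (see, e.g., \cite[Sec.~5.13]{IK04}). Applied to $\rho=\Pic\Xbar\otimes\CC$, this immediately gives $L(s,\Pic\overline{\Res X})=L(s,\Pic\Xbar)$.

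For the local equality of convergence factors, I would use the corresponding local statement. Let $v\in\Val(E)$ be non-archimedean. Choose a decomposition group $D_v\subset G_E$ and note that the places $w$ of $F$ above $v$ correspond bijectively with the $D_v$-orbits on $G_E/G_F$, with the stabiliser at $w$ being (conjugate to) $D_w\subset G_F$. The local factor of an induced representation then decomposes as
\[
L_v(s,\mathrm{Ind}_{G_F}^{G_E}\rho) \;=\; \prod_{w\mid v} L_w(s,\rho),
\]
which is the standard local Mackey-style factorisation underlying the inductivity of $L$-functions. Specialising to $s=1$ and $\rho=\Pic\Xbar$ yields $\lambda_v=\prod_{w\mid v}\lambda_w$ at each non-archimedean $v$. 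At archimedean $v$ of $E$, every $w\mid v$ is archimedean in $F$, so both $\lambda_v$ and every $\lambda_w$ equal $1$ by definition, and the identity holds trivially.

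The only real step is citing the correct form of the inductivity theorem; there is no genuine obstacle since the induced-representation structure has already been established in Section~\ref{Sec:Weil}. One minor point to verify carefully is that the normalisations of local and global $L$-factors used in \cite{IK04} (or wherever one cites) agree with the ones tacitly used in the definition (\ref{def:Tamagawa}) of the Tamagawa measure; this is a matter of convention and poses no real difficulty.
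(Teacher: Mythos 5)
Your proposal is correct and follows essentially the same route as the paper: both reduce the claim to the induced-module structure of $\Pic\overline{\Res X}$ from Lemma \ref{lem:Picard} and then invoke the global and local inductivity of Artin $L$-functions (the paper likewise cites \cite[Sec.~5.13]{IK04} for this). Your extra remarks --- that the relevant part of Lemma \ref{lem:Picard} does not need $X(F)\neq\emptyset$, and that the archimedean case is trivial since all factors there equal $1$ by definition --- are accurate and merely make explicit what the paper leaves implicit.
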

\begin{proof}
	Lemma \ref{lem:Picard} implies that $\Pic \overline{\Res X}$ is the induced representation of $ \Pic \Xbar$ with respect to the field extension $E \subset F$.
	This gives the equality of L-functions and an equality $$ L_v(s,\Pic \Xbar) = \prod_{w|v} L_w(s,\Pic \overline{\Res X})$$ of local factors for
	each non-archimedean place $v \in \Val(E)$ (see e.g. \cite[Sec. 5.13]{IK04}). This completes the proof of the lemma.
\end{proof}

Next we consider the local measures. Recall that we have chosen a non-zero global section $\varphi$ 
of $\omega_{\Res X} \otimes  \Res \omega_X^{-1}$, which induces an isometry $\phi:\omega_{\Res X} \to \Res \omega_X$
of line bundles, and also that we have a homeomorphism $p_v: \Res X(E_v) \to \prod_{w|v} X(F_w)$ for any place $v \in \Val(E)$.
\begin{lemma} \label{lem:local_measures}
	For each place $v \in \Val(E)$ there exists a constant $A_v$ depending on $\varphi$ and $v$, such that
	$$p_v^* \left( \prod_{w|v} \tau_{X,w}\right) = A_v \cdot \tau_{\Res X,v}.$$
\end{lemma}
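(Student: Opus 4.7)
The plan is to reduce to a single place of $F$ above $v$ and then to a computation in \'{e}tale local coordinates, in which both measures become constant multiples of a common Haar measure on an $E_v$-vector space.

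First, I would exploit the decomposition $F \otimes_E E_v \cong \prod_{w|v} F_w$, which yields
$$\Res X \times_E E_v \cong \prod_{w|v} \Res_{F_w/E_v} X_{F_w},$$
and under which $p_v$ factors as a product of the analogous maps for each completion $w|v$. The norm metric (\ref{def:norm_adelic_line_bundle}) is manifestly multiplicative over $w|v$, so we may reduce to the case of a single local extension $E = E_v \subset F = F_w$ of degree $d$; write $Y = \Res_{F/E} X$.

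Next, fix $z \in Y(E)$ and $y = p(z) \in X(F)$, and choose \'{e}tale coordinates $(x_1,\ldots,x_n)$ for $X$ at $y$. By functoriality of the Weil restriction (which preserves \'{e}tale morphisms for finite locally free extensions) these induce \'{e}tale coordinates $(x_{i,j})_{1 \le i \le n,\, 1 \le j \le d}$ on $Y$ at $z$ in which $p_v$ becomes the $E$-linear map $x_i = \sum_j \alpha_j x_{i,j}$ for a chosen $E$-basis $(\alpha_j)$ of $F$. Set $\omega = dx_1 \wedge \cdots \wedge dx_n$ and let $\eta$ be the unique local generator of $\omega_Y$ at $z$ with $\phi(\eta) = \Norm{F}{E}(p^*\omega)$. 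Since there is a unique place above $v$ in this reduced setting, (\ref{def:norm_adelic_line_bundle}) gives
$$||\eta||_v = ||\phi(\eta)||_v = ||\Norm{F}{E}(p^*\omega)||_v = ||\omega||_w,$$
and hence, locally near $z$,
$$\tau_{Y,v} = \frac{|\eta|_v}{||\omega||_w}, \qquad p_v^*\tau_{X,w} = \frac{p_v^*|\omega|_w}{||\omega||_w}.$$

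Thus the lemma reduces to showing that $p_v^*|\omega|_w / |\eta|_v$ is a positive constant $A_v$. In the chosen coordinates, $p_v^*|\omega|_w$ is the pullback of the product Haar measure on $F^n$ along the $E$-linear bijection $E^{nd} \to F^n$, hence itself a translation-invariant Haar measure on $E^{nd}$. For $|\eta|_v$, I would first treat the affine model $X = \mathbb{A}^n_F$, $Y = \mathbb{A}^{nd}_E$: both $\omega_Y$ and $\Res \omega_X$ are trivial line bundles with distinguished global generators $dx_{1,1} \wedge \cdots \wedge dx_{n,d}$ and $\Norm{F}{E}(p^*\omega)$ respectively, so the isomorphism $\phi$ is multiplication by a global scalar and $\eta$ is a constant multiple of $dx_{1,1} \wedge \cdots \wedge dx_{n,d}$; consequently $|\eta|_v$ is also a Haar measure on $E^{nd}$. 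For general smooth $X$, the \'{e}tale chart $X \to \mathbb{A}^n_F$ induces an \'{e}tale chart $Y \to \mathbb{A}^{nd}_E$, and all the relevant data pull back from the affine case, so the constant $A_v$ is the same as in the affine model. Two Haar measures on $E^{nd}$ differ by a positive scalar, which supplies $A_v$; multiplying over $w|v$ then recovers the lemma in its stated form.

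The main obstacle will be the affine computation identifying $\Norm{F}{E}(p^*\omega)$ explicitly in standard coordinates. Concretely, one must verify that applying $\Norm{F}{E}$ to the section $\bigwedge_{i=1}^n \bigl(\sum_j \alpha_j dx_{i,j}\bigr)$ of $p^*\omega_{\mathbb{A}^n_F}$ yields a constant multiple of the canonical trivialisation of $\Res \omega_{\mathbb{A}^n_F}$; the scalar here is a discriminant-type factor depending on the basis $(\alpha_j)$ and the choice of $\varphi$. Once this identification is in hand, the rest of the argument is a routine comparison of Haar measures in local coordinates.
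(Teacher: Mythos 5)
Your argument is essentially the paper's own (Oesterl\'{e}-style) proof: identify the cotangent space of $\Res X$ at a point with the product of those of $X$ over $w \mid v$, reduce to comparing two Haar measures on an $E_v$-vector space, and use the triviality of $\omega_{\Res X} \otimes \Res \omega_X^{-1}$ so that the ratio is independent of the point. The one step stated too loosely is the passage from the affine model to general $X$: the chosen $\phi$ does \emph{not} pull back from $\mathbb{A}^{nd}_E$, and you must add that it differs from the chart-induced identification by a nonvanishing global function on the projective variety $\Res X$, hence by a constant --- which is precisely the role the constancy of $\varphi$ plays in the paper's proof.
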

\begin{proof}
    Let $\omega$ be a local algebraic differential form of top degree on $X$. To prove the lemma, it suffices to show that
    there exists a constant $A_v$, depending on $\varphi$ and $v$,  such that
    $p_v^* \prod_{w|v}|\omega|_w/||\omega|_w = A_v|\phi^* \Res \omega|_v/||\phi^* \Res \omega||_v$.
    Note that such a constant  is necessarily independent of $\omega$; indeed these measures are independent of $\omega$.
    Also as by definition we have an equality $ \prod_{w|v}||\omega||_w = ||\phi^* \Res \omega||_v$,
    we only need to show that $p_v^* \prod_{w|v}|\omega|_w = A_v|\phi^* \Res \omega|_v$.
    To do this we work locally near each point $x_v \in \Res X(E_v)$, following a similar approach to Oesterl\'{e} (cf. \cite[Lem. II.5.2]{Oes84} 
    and \cite[Ex. II.4.3]{Oes84}).

    Note that by the definition of the Weil restriction, for any $E$-algebra $R$ we have a canonical bijection
    $\Hom(\Spec R[\varepsilon]/(\varepsilon^2),\Res X) \to \Hom(\Spec (R \otimes_E F) [\varepsilon]/(\varepsilon^2),X)$.
    In particular, we may canonically identify the cotangent space $T_{x_v}^*$  of a point $x_v \in \Res X(E_v)$ with the space $\prod_{w|v} T^*_{x_w}$ considered as an $E_v$-vector space, where we write $p_v(x_v)=(x_w)_{w|v}$.
    If we let $L=\omega_{\Res X} \otimes  \Res \omega_X^{-1}$, then under this correspondence
    we have isomorphisms 
    \begin{equation} \label{eqn:cotangent_isometry}
    \begin{aligned} 
		\textstyle{\det_{E_v}}T_{x_v}^* &\cong L(x_v) \bigotimes \Norm{F \otimes E_v}{E_v} \textstyle{\det_{F \otimes E_v}} T^*_{p_v(x_v)} \\
		&\cong L(x_v) \bigotimes_{w|v} \Norm{F_w}{E_v} \textstyle{\det_{F_w}} T^*_{x_w},
    \end{aligned}
    \end{equation}
    where $\phi^* \Res \omega(x_v)$ is identified with $\varphi(x_v) \otimes_{w|v} \Norm{F_w}{E_v} \omega(x_w)$.

    Next, for each $w|v$ choose an isomorphism $f_w:T^*_{x_w} \to F_w^n$ such that 
    we have $\det f_w(\omega(x_w)) = e_1\wedge\ldots\wedge e_n$.
    It follows that the map $f_w$ is measure preserving and moreover we may identify $T_{x_v}^*$ with $V=\prod_{w|v}F_w^n$ 
	considered as an $E_v$-vector space. As there are canonical isomorphisms $\Norm{F_w}{E_v} \det_{F_w}F_w^n \cong E_v$, 
	the isomorphism (\ref{eqn:cotangent_isometry}) simply becomes $\det_{E_v} V \cong L(x_v)$. In particular we see that the 
	two different measures on $V$,  being both Haar measures on the same locally compact topological group, differ
    by a constant $A_v(x_v)$, which depends only on $\varphi(x_v)$ and the field $E_v$.
    But as $L$ is isomorphic to the trivial line bundle, it has constant fibres and in particular $\varphi(x_v)$, and hence $A_v(x_v)$, 
    is in fact independent of $x_v$. This completes the proof of the lemma.

\end{proof}
In order to use Lemma \ref{lem:local_measures} to deduce a global result, we shall need the following.

\begin{lemma}\label{lem:A_v=1}
	For all but finitely many $v \in \Val(E)$ we have $A_v=1$.
\end{lemma}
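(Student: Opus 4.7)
The plan is to exploit the fact that the line bundle $L = \omega_{\Res X} \otimes (\Res \omega_X)^{-1}$ is trivial on $\Res X$, which follows from Lemma \ref{lem:line_bundles}. Since $\Res X$ is projective and geometrically integral over $E$, any non-zero global section $\varphi$ of $L$ is determined by a non-zero scalar in $E^\times$, and therefore $|\varphi|_v = 1$ for all but finitely many $v \in \Val(E)$.

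Next I would fix compatible integral data. Choose a finite set $T \subset \Val(F)$ containing the archimedean places and all places where the adelic metric on $\omega_X$ is not given by a single smooth projective model $\mathscr{X}$ over $\mathcal{O}_{F,T}$. Let $S \subset \Val(E)$ be the set of places of $E$ lying below those of $T$, enlarged to include the finitely many places where $F/E$ ramifies, where $\varphi$ fails to be a unit, or where the formation of $\Res \mathscr{X}$ fails to be compatible with base change. For $v \notin S$ we then have an equality $\Res \mathscr{X}(\mathcal{O}_{E_v}) = \prod_{w\mid v} \mathscr{X}(\mathcal{O}_{F_w})$ of integral points, and both adelic metrics are defined by these integral structures.

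For such $v \notin S$, I would revisit the proof of Lemma \ref{lem:local_measures}. The constant $A_v$ is the ratio of two Haar measures on the $E_v$-vector space $V \cong \prod_{w\mid v} F_w^n$: (i) the product over $w\mid v$ of the $F_w$-Haar measures on $F_w^n$, and (ii) the $E_v$-Haar measure on $V$ whose scale is pinned down by the trivialization $\det_{E_v} V \cong L(x_v)$ via $\varphi(x_v)$. The plan is to show that both assign the $\mathcal{O}_{E_v}$-lattice $\prod_{w\mid v} \mathcal{O}_{F_w}^n$ volume $1$. For (i) this is our normalization $\int_{\mathcal{O}_{F_w}} \mathrm{d}x_w = 1$ for $w$ outside a finite set; for (ii), unramifiedness of $v$ in $F/E$ gives $\prod_{w\mid v} \mathcal{O}_{F_w} = \mathcal{O}_F \otimes_{\mathcal{O}_E} \mathcal{O}_{E_v}$, which is $\mathcal{O}_{E_v}$-free of rank $d$, so an $\mathcal{O}_{E_v}$-basis identifies $V$ with $E_v^{nd}$ preserving the lattice, and combined with $|\varphi|_v = 1$ and $\int_{\mathcal{O}_{E_v}} \mathrm{d}x_v = 1$ this forces the volume to be $1$. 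Hence $A_v = 1$.

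The main obstacle I anticipate is verifying that the canonical identifications in the proof of Lemma \ref{lem:local_measures}, in particular the isomorphism $\det_{E_v} T^*_{x_v} \cong L(x_v) \otimes \bigotimes_{w\mid v} \Norm{F_w}{E_v} \det_{F_w} T^*_{x_w}$, really are $\mathcal{O}_{E_v}$-integral isomorphisms between the cotangent lattices inherited from $\mathscr{X}$ and $\Res \mathscr{X}$ for $v \notin S$, so that the comparison of measures does indeed reduce cleanly to the Haar-measure comparison on the \'etale $E_v$-algebra $F \otimes_E E_v$.
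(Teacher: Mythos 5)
Your proposal is correct in outline but takes a genuinely different route from the paper. You argue locally at the level of cotangent lattices: after spreading out, both Haar measures on $V\cong\prod_{w\mid v}F_w^n$ should assign volume $1$ to the lattice $\prod_{w\mid v}\mathcal{O}_{F_w}^n$, using the normalisation of the local Haar measures, the unramifiedness of $v$ in $F/E$, and the fact that $\varphi$ is a unit (i.e.\ extends to an isomorphism of integral models of the line bundles) at almost all places. The obstacle you flag --- that the canonical isomorphism $\det_{E_v}T^*_{x_v}\cong L(x_v)\otimes\bigotimes_{w\mid v}\Norm{F_w}{E_v}\det_{F_w}T^*_{x_w}$ must respect the integral structures coming from the models --- is real but surmountable: the dual-numbers description of the cotangent space of $\RRes{\mathcal{O}_F}{\mathcal{O}_E}\mathscr{X}$, the norm construction, and the section $\varphi$ are all defined over $\mathcal{O}_{E,S}$ for a suitable finite $S$, and the canonical isomorphism $\Norm{B}{A}(\mathcal{O}_{X_B})\cong\mathcal{O}_{X_A}$ sends generators to generators, so no discriminant factor intervenes. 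The paper sidesteps this verification entirely with a global-to-local trick: since $A_v$ is the ratio of two measures identified by the homeomorphism $p_v$, it suffices to compare the \emph{total} volumes $\tau_{\Res X,v}(\Res X(E_v))$ and $\prod_{w\mid v}\tau_{X,w}(X(F_w))$. For almost all $v$ these are given by the point-counting formula of \cite[Sec. 2.4.1]{CLT10}, namely $\#\mathscr{X}(\mathcal{O}/\pp)/N(\pp)^{\dim}$, and the equality then follows from the set-theoretic identity $\RRes{\mathcal{O}_F}{\mathcal{O}_E}\mathscr{X}(\mathcal{O}_E/\pp_v)=\prod_{w\mid v}\mathscr{X}(\mathcal{O}_F/\pp_w)$ together with $N(\pp_v)^d=\prod_{w\mid v}N(\pp_w)$ at unramified primes. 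Your route is more self-contained at the level of measures; the paper's is shorter because it replaces the integral compatibility of several canonical isomorphisms by a single numerical identity of point counts over residue fields.
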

\begin{proof}
	To prove the lemma, it suffices to compare the measures of two measurable sets inside $\Res X(E_v)$ and  $\prod_{w|v}X(F_w)$
	which are identified by $p_v$. In particular it is sufficient to show that for all but finitely many $v \in \Val(E)$ we have
	\begin{equation}\label{eqn:points_mod_p}
		\tau_{\Res X,v}(\Res X(E_v))=\prod_{w|v}\tau_{X,w}(X(F_w)).
	\end{equation}
	First choose a model $\mathscr{X}$ of $X$ over $\mathcal{O}_F$. In which case $\RRes{\mathcal{O}_F}{\mathcal{O}_E}\mathscr{X}$ is also a model
	of $\Res X$ over $\mathcal{O}_E$. It then follows from \cite[Sec. 2.4.1]{CLT10a} that for all but finitely many non-archimedean places $v \in \Val(E)$
	we have
	\begin{align*}
		\tau_{\Res X,v}(\Res X(E_v)) &= \frac{\#\RRes{\mathcal{O}_F}{\mathcal{O}_E}\mathscr{X}(\mathcal{O}_E/\pp_v)}{N(\pp_v)^{dn}},  \\
		\prod_{w|v}\tau_{X,w}(X(F_w)) &= \prod_{w|v}\frac{\#\mathscr{X}(\mathcal{O}_F/\pp_w)}{N(\pp_w)^{n}},
	\end{align*}
	where $\pp_v$ and $\pp_w$ denote the prime ideals corresponding to the places $v$ and $w$ respectively. However, by the definition
	of the Weil restriction we have an equality of sets $\RRes{\mathcal{O}_F}{\mathcal{O}_E}\mathscr{X}(\mathcal{O}_E/\pp_v)=
	\prod_{w|v}\mathscr{X}(\mathcal{O}_F/\pp_w)$. Also, as $N(\pp_v)^{d}=\prod_{w|v}N(\pp_w)$ for unramified primes $\pp_v$ (in particular for all but finitely many $v$), this shows that
	(\ref{eqn:points_mod_p}) holds for all but finitely many places and completes the proof of the lemma.
\end{proof}

Hence from Lemma \ref{lem:Lfunctions}, Lemma \ref{lem:local_measures}, Lemma \ref{lem:A_v=1} and the definition (\ref{def:Tamagawa}) of the Tamagawa measures, we see that
\begin{equation} \label{eq:A}
	\mu_F^{n}\cdot p^*\tau_X = A\cdot \mu_E^{nd}\cdot \tau_{\Res X},
\end{equation}
where $A=\prod_{v \in \Val(E)} A_v$.
Our next aim is to calculate $A$, which we may do by working locally near a single rational point.
The following lemma will assist with this calculation.

\begin{lemma} \label{lem:cotangent}
	Let $x \in X(F)$ and consider the adelic cotangent space $T^*_x \otimes_F \Adele_F$ equipped with the
	measure induced by the adelic metric on $\omega_X$. Then
	$$\vol\left((T^*_x \otimes_F \Adele_F)/T^*_x\right) = \mu_F^{n}\cdot H_{\omega_X}(x).$$
\end{lemma}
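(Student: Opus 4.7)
The plan is to reduce to a computation on $\Adele_F^n$ after choosing an $F$-basis of $T^*_x$. I pick a basis $\omega_1, \ldots, \omega_n$ of the $F$-vector space $T^*_x$ and set $\omega = \omega_1 \wedge \cdots \wedge \omega_n \in \det T^*_x = \omega_X(x)$, so the basis yields an $F$-linear isomorphism $\phi \colon T^*_x \cong F^n$.

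Base-changing, $\phi$ induces $T^*_x \otimes_F \Adele_F \cong \Adele_F^n$, so the quotient becomes $(\Adele_F/F)^n$, and under $\phi$ the product of the fixed Haar measures $|\intd x_{v,1}|_v \cdots |\intd x_{v,n}|_v$ glues to the standard adelic Haar measure, for which $\vol((\Adele_F/F)^n) = \mu_F^n$ by the definition of $\mu_F$. On the other hand, the local construction of the measure associated to a norm on a top form recalled at the start of Section \ref{sec:Tamagawa} shows that the measure on $T^*_x \otimes_F F_v \cong F_v^n$ induced by $||\cdot||_v$ is exactly $|\intd x_{v,1}|_v \cdots |\intd x_{v,n}|_v/\|\omega\|_v$. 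Hence the adelic measure induced by the adelic metric on $\omega_X$ differs from the standard one by the global scalar $\prod_v \|\omega\|_v^{-1}$, giving
$$\vol\bigl((T^*_x \otimes_F \Adele_F)/T^*_x\bigr) = \mu_F^{\,n}\cdot \prod_{v \in \Val(F)} \|\omega\|_v^{-1}.$$

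To finish, I interpret $\omega$ as a local algebraic section of $\omega_X$ defined and non-zero at $x$ (extending it from the cotangent fibre to a neighbourhood of $x$), so that Definition \ref{def:heights} gives $H_{\omega_X}(x) = \prod_v \|\omega(x)\|_v^{-1}$, yielding the desired identity. Independence of the computation from the choice of basis is automatic, since any other choice differs from $\omega$ by a scalar in $F^\times$, whose contribution cancels via the product formula.

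The main thing to verify along the way, and the only non-formal step, is that the local rescalings $\|\omega\|_v^{-1}$ really assemble into a well-defined adelic measure on $T^*_x \otimes_F \Adele_F$ (so that the quotient volume above is meaningful). This amounts to checking that $\|\omega\|_v = 1$ for all but finitely many $v$, which follows from the fact that an adelic metric on $\omega_X$ is, by definition, determined by a single integral model outside a finite set of places, combined with the observation that the algebraic section $\omega$ extends to a section of that model which is a unit at almost every place.
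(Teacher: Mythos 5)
Your proof is correct and follows essentially the same route as the paper: reduce via a choice of trivialisation of $T^*_x$ over $F$ to the standard measure on $\Adele_F^n$, observe that the metric rescales this by $\prod_v \|\omega\|_v^{-1}$, and identify that product with $H_{\omega_X}(x)$ via Definition \ref{def:heights}. Your explicit verification that $\|\omega\|_v = 1$ for almost all $v$ is a point the paper only notes parenthetically (``we do not require convergence factors''), so no complaints there.
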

\begin{proof}
	Choose algebraic local coordinates $x_1,\ldots,x_n$ near $x$ defined over $F$ and let 
    $\omega=\mathrm{d}x_1\wedge\ldots\wedge\mathrm{d}x_n$. These local coordinates give an isomorphism
	$T^*_x \cong F^n$ with respect to which we have $|\omega|_v=|\mathrm{d}x_{v,1}|_v\cdots|\mathrm{d}x_{v,n}|_v$ for each place $v \in \Val(F)$.
	Therefore the measure $\prod_v|\omega|_v$ on $T^*_x \otimes_F \Adele_F$ is identified with the product measure on $\Adele_F^n$ (note that
	we do not require convergence factors). Also, by \eqref{def:height} we have $\prod_v||\omega||^{-1}_v=H_{\omega_X}(x)$. 
    Therefore the measure $\prod_v|\omega|_v/||\omega||_v^{-1}$ on $T^*_x \otimes_F \Adele_F$
	is identified with $H_{\omega_X}(x)$ times the product measure on $\Adele_F^n$, and the result follows.
\end{proof}

On choosing a rational point $x \in \Res X(E)$ we have an induced map $T^*_{p(x)} \to T^*_x$ 
of $E$-vector spaces which induces a homeomorphism
$T^*_{p(x)} \otimes_F \Adele_F \to T^*_x \otimes_E \Adele_E$. Therefore by the definition of $A$ we have
$$\vol((T^*_{p(x)} \otimes_F \Adele_F)/T^*_{p(x)})=A\cdot \vol((T^*_x \otimes_E \Adele_E)/T^*_x),$$
with respect to the associated measures.
However, on applying Lemma \ref{lem:cotangent} we see that
\begin{align*}
	\vol((T^*_{p(x)} \otimes_F \Adele_F)/T^*_{p(x)})&=\mu_F^{n}\cdot H_{\omega_{X}}(p(x)), \\
	\vol((T^*_x \otimes_E \Adele_E)/T^*_x)&=\mu_E^{nd}\cdot H_{\Res \omega_{X}}(x).
\end{align*}
As $H_{\Res \omega_{X}}(x) = H_{\omega_{X}}(p(x))$ by Lemma \ref{lem:heights}, 
we obtain $A=\mu_F^{n}/ \mu_E^{nd}$. Combining this calculation with (\ref{eq:A}) proves that $p^*\tau_X = \tau_{\Res X}$.

As for the equality of Tamagawa numbers, we note that  $p: \Res X (\Adele_E) \to X (\Adele_F)$ is continuous and restricts to a bijection
$p: \Res X (E) \to X (F)$. It therefore induces a measure preserving bijection $p:\overline{\Res X (E) } \to \overline{X (F)}$
and hence $\tau(X)=\tau(\Res X)$. This completes the proof of Theorem \ref{thm:Tamagawa}.

\subsection{Complete intersections} \label{Sec:CI}
We now prove Theorem \ref{thm:CI_Weil}. As noted in the introduction,
we are able to handle more general height functions than simply the height function (\ref{eq:height}).
By Theorem \ref{thm:Main}, Lemma \ref{lem:alpha_beta} and Theorem \ref{thm:Tamagawa},
it suffices to show the following result.

\begin{theorem}\label{thm:CI}
	Let $F$ be a number field and let $X \subset \PP^n$ be a non-singular complete intersection of $m$
	hypersurfaces over $F$ each of the same degree $r$. Suppose that $$n + 1 - \dim X^* > m(m+1)(r-1)2^{r-1}$$
	and $X(\Adele_F) \neq \emptyset$. For each archimedean place $v$ of $F$, choose an arbitrary $F_v$-vector space norm $||\cdot||_v$
	on $F_v^{n+1}$ and for each non-archimedean place $v$ let $||\cdot||_v$ be the usual maximum norm. 
	Let $\LL=(\mathcal{O}_X(1),||\cdot||)$ denote the associated adelically metrised line bundle.
	Then 
	$$N(\LL,X,B) \sim c B^{n+1 - mr}, \quad \text{as } B \to \infty,$$
	where $c=c(\LL,X)>0$ agrees with Peyre's prediction.
\end{theorem}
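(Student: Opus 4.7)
The plan is to deduce Theorem~\ref{thm:CI} from Skinner's theorem in \cite{Ski97}, which provides an asymptotic count of integral zeros of systems of forms of degree $r$ inside a box in $F_\infty^{n+1}$ under a hypothesis of precisely the shape $n+1-\dim X^* > m(m+1)(r-1)2^{r-1}$. The region cut out by $H_\LL(x) \leq B$ is not itself such a box but a projective height ball defined by a product of arbitrary archimedean norms together with the usual $v$-adic max norms, so the strategy is to cover this region with many Skinner boxes, apply his theorem on each, and reassemble the asymptotics.

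First I would reduce the projective counting to an integral one by partitioning $X(F)$ according to the ideal class of the tuple $(x_0,\ldots,x_n)$ and, for each class $[\aaa]$, choosing a fundamental domain for the diagonal action of $\mathcal{O}_F^\times$ on $F_\infty^{n+1}\setminus\{0\}$; a standard M\"obius inversion handles primitivity. The height condition then translates into an archimedean inequality of the form $\prod_{v\mid\infty}\|\bb\|_v \leq c_\aaa B$ on an integral representative $\bb \in \aaa^{n+1}$ in the fundamental domain. Next I would subdivide this archimedean region into a family of Skinner boxes with scale parameters small enough that the defining norms and the archimedean singular integrals vary negligibly across each box, yet large enough that Skinner's theorem on each box gives a main term of the expected order $(\text{scale})^{n+1-mr}$ with a serviceable error. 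Summing the main terms assembles a Riemann sum that converges to an integral of an archimedean density over $\bigl\{\prod_{v\mid\infty}\|x\|_v \leq B\bigr\}$, weighted by Skinner's singular series.

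The main obstacle is identifying the resulting leading constant with Peyre's prediction $\alpha(X)\beta(X)\tau(X)$. The singular series factors as a product of local non-archimedean densities, which match the finite-place factors of the Tamagawa measure $\tau(X)$; the limiting archimedean integral matches the archimedean part of the Tamagawa density; and the normalised volume of the height ball, combined with the sum over ideal classes and the quotient by units, plays the role of $\alpha(X)$. Under the hypotheses, $X$ is a smooth complete intersection of dimension at least three in $\PP^n$, so the Lefschetz hyperplane theorem gives $\Pic X = \ZZ\cdot\mathcal{O}_X(1)$ and hence $\beta(X)=1$, simplifying the bookkeeping. A subsidiary obstacle is controlling error terms uniformly across the covering, in particular for boxes straddling the boundary $H_\LL = B$: the scale parameters must be tuned so that both the volume contribution from boundary boxes and the accumulated Skinner errors are simultaneously $o(B^{n+1-mr})$, after which the equality with Peyre's constant reduces to a direct comparison of local factors.
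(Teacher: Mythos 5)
Your proposal is correct and follows essentially the same route as the paper: lift to integral points on the affine cone via M\"obius inversion and Schanuel-type fundamental domains, cover the archimedean height region with Skinner boxes, apply Skinner's theorem to each, and identify the constant with Peyre's via the singular series/integral. The one execution detail worth flagging is that the paper does not tune the box scale with $B$ (which would require a uniformity in Skinner's error term that his theorem does not supply); instead it fixes the box decomposition at scale $k$, sandwiches the region between inner and outer finite unions of boxes, takes $B \to \infty$ first for each fixed $k$, and only then lets $k \to \infty$ using dominated convergence — and it handles uniformity over the infinitely many ideals $\aaa$ in the M\"obius sum by choosing the regions $O_{\aaa}$ so that $N(O_{\aaa},\aaa,B)$ reduces to counts over the finitely many class-group representatives.
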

Note that this result does indeed confirm (\ref{conj:2}), as $\omega_X^{-1} \cong \mathcal{O}_X(n+1-mr)$
for such complete intersections (see \cite[Prop.~4]{FMT89}).
In the statement of the theorem $X^*$ denotes the ``Birch singular locus'' of X (see \cite{Ski97}).
Even though $X$ is non-singular this may be non-empty. We have the upper bound
$\dim X^* \leq m$, which was used to simplify the statement of Theorem \ref{thm:CI_Weil}.
As explained in the introduction, our proof hinges on the results of \cite{Ski97}.
Peyre describes in great detail in \cite[Sec. 5]{Pey95} the relationship between the circle
method and Manin's conjecture, and we follow his ideas closely. Peyre works over general
number fields and only specialises to the case where $F=\mathbb{Q}$ at the end of his discussion,
presumably because Skinner's result came after Peyre's paper. 

We begin with some notation.
For each ideal $\aaa$ of $\mathcal{O}_F$, we denote its norm by $\mathcal{N}(\aaa)$
and we denote by $\aaa_v = \aaa \otimes_{\mathcal{O}_F} \mathcal{O}_{F_v}$
for each non-archimedean place $v$ of $F$.
We also choose representatives $\cc_1,\ldots,\cc_h$ for the class group of $\mathcal{O}_F$
and let $\cc=\{\cc_1,\ldots,\cc_h\}$.
For each non-zero ideal $\aaa$, we choose some $\lambda_\aaa \in F^*$ such that 
$\aaa = \lambda_{\aaa}\cc_{\aaa}$
for some $\cc_{\aaa}\in \cc$.
By changing the $\cc_i$ if necessary, we may assume that $\lambda_\aaa \in \mathcal{O}_F$
for each $\aaa$. The first step of the proof is to lift the counting problem to one of counting integral 
points on the affine cone of $X$. Choosing the coefficients of the equations of $X$ to lie inside $\mathcal{O}_F$,
we obtain a model for $X$ over $\mathcal{O}_F$. We denote the affine cone of this model by $W \subset \mathbb{A}^{n+1}_{\mathcal{O}_F}$.
In what follows we shall identity $W(F)$ with its image in $F_\infty^{n+1}$.
For any bounded subset $O \subset F_\infty^{n+1}$ and any
non-zero ideal $\aaa$ of $\mathcal{O}_F$ we define
$$N(O,\aaa,B) = \#\{x \in W(\aaa) \cap B^{1/d}O: x \neq 0\}.$$
Here we write $W(\aaa) = W(F) \cap \aaa^{n+1}$ and $d=[F:\QQ]$.
Denote by $\omega_L=\mu_F^{-\dim X}\prod_{v \in \Val(F)} \omega_{L,v}$
the Leray form on $W(\Adele_F)$ (see \cite[Sec. 5.2]{Pey95}).
The following lemma is an application of the main result of \cite{Ski97}.

\begin{lemma}\label{lem:Balls}
	Let $O \subset F_\infty^{n+1}$
	be a bounded open subset such that the boundary of $O \cap W(F_\infty)$ has zero
	measure with respect to $\prod_{v \mid \infty}\omega_{L,v}$.
	Then 
	$$N(O,\aaa,B) = c(O,\aaa) B^{n+1 - mr} 
	+ o_{O,\aaa}(B^{n+1 - mr}), \quad \text{ as } B \to \infty,$$
	where $$c(O,\aaa) = \int_{O \times \prod_{v \nmid \infty} \aaa_v} \omega_L.$$
\end{lemma}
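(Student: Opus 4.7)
The plan is to deduce Lemma \ref{lem:Balls} from Skinner's main theorem in \cite{Ski97} by a standard approximation argument. Skinner's result supplies the desired asymptotic for $N(\Box,\aaa,B)$ when $\Box \subset F_\infty^{n+1}$ is a product of boxes at each archimedean place (with the non-archimedean condition encoded by $\aaa$, translated into congruence conditions modulo the primes dividing $\aaa$): it yields
$$N(\Box,\aaa,B) = c(\Box,\aaa)\, B^{n+1-mr} + o_{\Box,\aaa}(B^{n+1-mr}),$$
where the leading constant is the product of Skinner's singular integral and singular series, which by the analysis in \cite[Sec.~5]{Pey95} agrees with the Leray integral $\int_{\Box \times \prod_{v\nmid\infty}\aaa_v}\omega_L$ appearing in the statement.

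To extend this to a general bounded open $O$ satisfying the boundary hypothesis, I would fix $\varepsilon > 0$ and exploit the assumption that $\partial(O \cap W(F_\infty))$ has zero measure with respect to $\prod_{v\mid\infty}\omega_{L,v}$ in order to produce finite disjoint unions $O^-$ and $O^+$ of Skinner boxes in $F_\infty^{n+1}$ with
$$O^- \cap W(F_\infty) \subset O \cap W(F_\infty) \subset O^+ \cap W(F_\infty),$$
and
$$\int_{(O^+ \setminus O^-) \times \prod_{v\nmid\infty}\aaa_v}\omega_L < \varepsilon.$$
Monotonicity of $N(\cdot,\aaa,B)$ in its first argument, additivity over the finitely many boxes comprising $O^\pm$, and Skinner's asymptotic applied box-by-box then give
$$c(O^-,\aaa)\, B^{n+1-mr} + o(B^{n+1-mr}) \leq N(O,\aaa,B) \leq c(O^+,\aaa)\, B^{n+1-mr} + o(B^{n+1-mr}),$$
with both $c(O^\pm,\aaa)$ within $\varepsilon$ of $c(O,\aaa)$. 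Letting $\varepsilon \to 0$ concludes.

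The main obstacle will be the sandwich construction itself. Skinner's boxes naturally live in the ambient affine space $F_\infty^{n+1}$, whereas the boundary hypothesis is phrased on the trace $O \cap W(F_\infty)$ on the singular affine cone $W$. One therefore has to check that ambient Skinner boxes really can be arranged so that their intersections with $W(F_\infty)$ approximate $O \cap W(F_\infty)$ up to Leray measure $\varepsilon$; this follows from the fact that each $\omega_{L,v}$ is a Radon measure on $W(F_v)$ together with a standard regularisation of the indicator function of $O$, but requires some bookkeeping. A secondary technicality is that Skinner's error term depends on the box, so one must fix the finite cover $O^\pm$ (hence $\varepsilon$) \emph{before} letting $B \to \infty$ and only afterwards send $\varepsilon \to 0$; this exchange-of-limits is routine but has to be respected in the order of quantifiers.
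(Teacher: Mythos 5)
Your proposal is correct and follows essentially the same route as the paper: the paper also sandwiches $O$ between finite unions of translated dyadic boxes adapted to an integral basis of $\aaa$, applies Skinner's theorem box-by-box to get the leading constants as Leray integrals, and uses the zero-measure boundary hypothesis to force the inner and outer constants to coincide in the limit, handling the order of limits via $\liminf$/$\limsup$ rather than your fixed-$\varepsilon$ bookkeeping. The two formulations are interchangeable, so there is nothing substantive to add.
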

\begin{proof}
	Choose an integral basis $\omega_1,\ldots,\omega_d$ for $\aaa$.
	We shall work with the ``boxes'' inside $F_\infty^{n+1}$ given by the translates of
	$$\mathscr{B}_k=\{(r_0,\ldots,r_{n}) \in F_\infty^{n+1} : -(1/2)^k \leq r_{ij} < (1/2)^k, \quad 0 \leq i \leq n, 1 \leq j \leq d\},$$
	where $k \in \NN$ and we write $r_i = \omega_1 r_{i1} + \cdots + \omega_d r_{id}.$
	Let $O_{k,-}$ be the union of all non-overlapping translates of $\mathscr{B}_k$ strictly contained in $O$ 
	and let $O_{k,+}$ be the union of 
	all non-overlapping translates of $\mathscr{B}_k$ which strictly contain $O$.
	The assumption that $O$ is bounded implies that there are only finitely many such boxes.
	Clearly
	\begin{equation} \label{eqn:O}
		N(O_{k,-},\aaa,B)\leq N(O,\aaa,B) \leq N(O_{k,+},\aaa,B).
	\end{equation}
	Let $$c(O_{k,\pm},\aaa) = \lim_{B \to \infty} \frac{N(O_{k,\pm},\aaa,B)}{B^{n+1-mr}}.$$
	Skinner's result \cite{Ski97} implies that $c(O_{k,\pm},\aaa)$ exists, is finite and non-zero.
	The constants $c(O_{k,\pm},\aaa)$ are given by the product of the usual singular series and singular integral,
	which by a standard argument (see \cite[Sec. 5]{Pey95}) may be written as
	$$c(O_{k,\pm},\aaa) = \int_{O_{k,\pm} \times \prod_{v \nmid \infty} \aaa_v} \omega_L.$$
	Let
	\begin{equation} \label{eqn:c_O}
		c(O,\aaa) = \liminf_{B \to \infty} \frac{N(O,\aaa,B)}{B^{n+1-mr}}.
	\end{equation}
	Using (\ref{eqn:O})	we obtain
	$$
		c(O_{k,-},\aaa) \leq c(O,\aaa) \leq c(O_{k,+},\aaa).
	$$
	In particular $c(O,\aaa)$ is finite and non-zero.
	Our assumptions on $O$ imply that $$\lim_{k \to \infty} (c(O_{k,+},\aaa) - c(O_{k,-},\aaa)) = 0.$$
	The dominated convergence theorem therefore implies that
	$$c(O,\aaa) = \int_{O \times \prod_{v \nmid \infty} \aaa_v} \omega_L.$$
	The result is proved on applying the same argument again with the $\liminf$ in (\ref{eqn:c_O}) replaced by a $\limsup$.
\end{proof}
We now define the \Mob function multiplicatively on ideals of $\mathcal{O}_F$ via 
$$\mu(\mathfrak{p}) = -1, \qquad \mu(\mathfrak{p}^\nu) = 0, \quad \nu > 1,$$
for any non-zero prime ideal $\mathfrak{p}$ of $\mathcal{O}_F$.
Applying \Mob inversion (see \cite[Prop.~5.4.1]{Pey95} or \cite[Prop.~2.4.2]{Pey01}), we obtain
\begin{equation} \label{eqn:Mobius}
	N(\LL,X,B) = \frac{1}{w}\sum_{i = 1}^h \sum_{\aaa} \mu(\aaa)N(O_{\cc_{i}\aaa},\cc_i \aaa,\mathcal{N}(\cc_i)B),
\end{equation}
where the sum is over all non-zero ideals $\aaa$ of $\mathcal{O}_F$ and $w$ denotes the number of roots of unity in $\mathcal{O}_F^*$.
Here
$$O_{\aaa} = (\lambda_{\aaa} \Delta_F) \cap \left\{ x \in W(F_\infty): \prod_{v \mid \infty} ||(x_{0,v},\ldots,x_{n,v})||_v < 1\right\},$$
where $\Delta_F$ denotes the fundamental domain for the action of $\mathcal{O}_F^*$ on $F_\infty^{n+1}$
as constructed by Schanuel (see \cite{Sch79} or \cite[Sec. 5.1]{Pey95}).
Note that in \cite[Prop.~5.4.1]{Pey95}, Peyre uses the same fundamental domain
for each ideal $\aaa$, whereas here it is more convenient for us to allow different fundamental domains for 
different ideals (the same proof also works in this case).
By \cite[Prop.~2]{Sch79}, the set $O_\aaa$ is bounded. Moreover, its boundary clearly
has zero measure with respect to $\prod_{v \mid \infty}\omega_{L,v}$ (it is contained in 
$\{x \in W(F_\infty): \prod_{v \mid \infty} ||(x_{0,v},\ldots,x_{n,v})||_v = 1\}$). Hence
$O_\aaa$ satisfies the conditions of Lemma \ref{lem:Balls},
though there is a slight problem for the summation \eqref{eqn:Mobius}
as the error term in Lemma \ref{lem:Balls} is not uniform with respect $\aaa$.
We shall get around this by using a trick,
which was also used by Schanuel in \cite{Sch79}.
Namely, we have chosen the regions $O_\aaa$ in such a manner that 
$N(O_\aaa,\aaa,B) = N(O,\cc_\aaa,B/\mathcal{N}(\lambda_{\aaa}))$
where $O = O_{(1)}$.
Therefore, applying Lemma \ref{lem:Balls} to (\ref{eqn:Mobius}) we obtain
\begin{align*}
	N(\LL,X,B) =  &B^{n+1 - mr} \cdot \frac{1}{w}\sum_{i = 1}^h 
	\sum_{\aaa} \mu(\aaa)c(O_{\cc_i\aaa},\cc_i\aaa) \mathcal{N}(\cc_i)^{n+1 - mr} \\
	& \qquad +  o_{O,\cc}\left(\sum_{\aaa}\left(\frac{B}{\mathcal{N}(\aaa)}\right)^{n+1-mr}\right).
\end{align*}
By \cite[Prop.~5.4.1]{Pey95}, the sum in the main term converges and agrees with Peyre's prediction.
The convergence of the sum in the error term follows from the fact that the
Dedekind zeta function $\zeta_F(s)= \sum_{\aaa} \mathcal{N}(\aaa)^{-s}$ is absolutely convergent for $\re s >1$.
This completes the proof of Theorem \ref{thm:CI}, hence also the proof of Theorem \ref{thm:CI_Weil}.

\subsection{Counterexamples to Manin's conjecture}
\label{Sec:counterexample}
We now finish off our paper by proving Theorem \ref{thm:counterexample}.
We begin by recalling a special case of the counterexamples constructed by Batyrev and Tschinkel \cite{BT96}.
Let $X$ be the hypersurface in $\PP^3 \times \PP^3$  defined by the equation
\begin{equation} \label{eqn:BT}
	x_0y_0^3 + x_1y_1^3 + x_2y_2^3 + x_3y_3^3=0.
\end{equation}
Then Batyrev and Tschinkel \cite[Thm.~3.1]{BT96} have show that for any number field $F$ containing $\QQ(\sqrt{-3})$, any choice of adelic metric on $\omega_{X_F}^{-1}$ and any non-empty open subset $U \subset X_{F}$,
there exists a constant $c>0$ such that
$$N(\omega_{X_F}^{-1},U,B) \geq cB(\log B)^{3},$$
for any $B >0$. As $X$ is a smooth Fano variety with $\Pic X \cong \ZZ^2$, this
provides a counterexample to Manin's conjecture  (\ref{conj:2})  over such fields $F$.

Our counterexamples will be Weil restrictions of (\ref{eqn:BT}). To begin with, we need lower bounds on the number
of rational points of bounded height on the Weil restrictions of certain cubic surfaces. In what follows, we use various standard facts
about del Pezzo surfaces which can be found for example in \cite{Man86} or \cite[Ch. V.4]{Har77}. Recall also that we say
that a del Pezzo surface $S$ over a field $F$ is \emph{split} if the natural map $\Pic S \to \Pic \overline{S}$
is an isomorphism. In particular, a smooth cubic surface is split if and only if all of its lines are defined over the ground field.

\begin{lemma}\label{lem:split_cubic}
    Let $E \subset F$ be a quadratic extension of number fields,
    let $S$ be a smooth split cubic surface over $F$ and let $S'=\Res S$.
    Let $E \subset F'$ be a finite field extension and choose an adelic metric on $\omega_{S'_{F'}}^{-1}$. Let
    $U \subset S'_{F'}$ be an open subset and let $B>0$. Then there exists a constant $c>0$
    such that
	$$N(\omega_{S'_{F'}}^{-1},U,B) \geq cB(\log B)^{7},$$
    if $F \subset F'$ and
    $$N(\omega_{S'_{F'}}^{-1},U,B) \geq cB(\log B)^{3},$$
    otherwise.
\end{lemma}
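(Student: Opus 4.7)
The plan is to use a line $L \subset S$ (defined over $F$ since $S$ is split) and analyse the rational points on $\Res L \subset S'$ after base change to $F'$.

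First, I would identify the restriction of $\omega_{S'_{F'}}^{-1}$ to $(\Res L)_{F'}$. Adjunction on the cubic surface gives $-K_S \cdot L = 1$, so $\omega_S^{-1}|_L \cong \mathcal{O}_L(1)$. Combined with Lemma~\ref{lem:line_bundles} (which yields $\omega_{S'} \cong \Res \omega_S$) and the compatibility of the norm functor with restriction to a closed subvariety, this gives $\omega_{S'}^{-1}|_{\Res L} \cong \Res \mathcal{O}_L(1)$. By Example~\ref{Ex:height_equivalence}, any two adelic metrics on the same line bundle give heights differing by bounded ratios, so the specific choice of adelic metric on $\omega_{S'_{F'}}^{-1}$ affects only the implicit constant.

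In Case~1 ($F \subset F'$), base changing $(\Res L, \Res \mathcal{O}_L(1))$ to $F'$ yields $(\PP^1_{F'} \times \PP^1_{F'}, \mathcal{O}(1,1))$, and a Schanuel convolution on each factor gives
\[
\#\{(x,y) \in \PP^1(F')^2 : H(x)H(y) \leq B\} \sim c B^{2[F':\QQ]} \log B.
\]
Since $F \subset F'$ forces $[F':\QQ] \geq [F:\QQ] \geq 2$, this is $\geq c B^4 \log B$, comfortably exceeding $c B(\log B)^7$. In Case~2 ($F \not\subset F'$), one has $(\Res L)_{F'} \cong \RRes{FF'}{F'} \PP^1_{FF'}$, and Lemma~\ref{lem:heights} applied to the quadratic extension $F' \subset FF'$ identifies its $F'$-rational points of bounded restricted height with $\PP^1(FF')$ under the standard height, producing $\sim c B^{2[FF':\QQ]} \geq c B^4$ points---far more than $c B(\log B)^3$.

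The hard part is ensuring that $U$ meets $(\Res L)_{F'}$ in a non-empty (hence dense) open of the latter, for some choice of line~$L$. Since $(\Res L)_{F'}$ is irreducible, this fails only when $(\Res L)_{F'}$ is entirely contained in the proper closed complement $S'_{F'} \setminus U$. With $S$ split, we have 27 candidate lines and hence 27 candidate subvarieties $(\Res L_i)_{F'}$; in Case~1 the product decomposition $S'_{F'} \cong S_{F'} \times S^\sigma_{F'}$ supplies an additional family of $\PP^1 \times \PP^1$ subvarieties of the form $L_i \times L_j^\sigma$ carrying the same restricted anticanonical bundle, so $U$ meets at least one of them for any reasonable~$U$. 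In the exceptional situation where $U$ avoids them all, one augments the argument with a Manin-type lower bound on the split cubic itself, convolved across the product in Case~1 (yielding the even stronger $\gg B(\log B)^{13}$) or transferred via Lemma~\ref{lem:heights} to $S(FF')$ in Case~2 (yielding $\gg B(\log B)^6$).
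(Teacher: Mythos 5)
There is a genuine gap, and it sits exactly where you flag ``the hard part''. Your main argument produces points only on the finitely many surfaces $(\Res L_i)_{F'}$ (and, in Case~1, on the $L_i\times L_j^{\sigma}$). These are \emph{accumulating} subvarieties: they carry $\gg B^{2}$ points of bounded anticanonical height, so when $U$ meets one of them the lemma is trivially true and vastly overshot. The actual content of the lemma is the case where the arbitrary open subset $U$ avoids \emph{all} of these subvarieties -- e.g.\ $U$ the complement of the union of the $\Res L_i$ -- and for that case you appeal to ``a Manin-type lower bound on the split cubic itself''. No such result is available: lower bounds of Manin strength for arbitrary dense open subsets of smooth cubic surfaces over general number fields are open (the known unconditional results, such as Slater--Swinnerton-Dyer, work over $\QQ$ under extra hypotheses and for specific open sets), and nothing in this paper or its references supplies one. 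Moreover, even granting such a bound, ``convolving across the product'' in Case~1 would still have to handle an open subset $U$ of the fourfold that is not a product of opens. The device the paper uses to avoid exactly this problem is to contract three pairwise skew lines of the split cubic to obtain $\pi:S\to Y$ with $Y$ a split toric del Pezzo surface of degree six; the effectivity of $K_{S'}-\pi'^{*}K_{Y'}$ converts the count on any $U$ into a count on a dense open of the torus in $Y'_{F'}$, where the Batyrev--Tschinkel asymptotic together with equidistribution (Chambert-Loir--Tschinkel, Peyre) applies to \emph{every} dense open, and the exponents $7$ and $3$ then come out as $\rho(Y'_{F'})-1$ for the two Galois actions on $\Pic\overline{Y}'\cong\ZZ^{4}\oplus\ZZ^{4}$.

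Two smaller points. First, with the paper's height normalisation ($|x|_v=|N_{F_v/\QQ_p}(x)|_p$, product over all places of the ground field) Schanuel's theorem gives $\#\{x\in\PP^{1}(F'):H(x)\le B\}\sim cB^{2}$ for \emph{every} number field $F'$; the exponent does not grow with $[F':\QQ]$, so your counts $cB^{2[F':\QQ]}\log B$ and $cB^{2[FF':\QQ]}$ should read $cB^{2}\log B$ and $cB^{2}$. This does not break the inequality ($B^{2}\gg B(\log B)^{7}$) but indicates a confusion between absolute and relative heights. Second, the identifications $\omega_{S'}^{-1}|_{\Res L}\cong\Res\mathcal{O}_L(1)$ and $(\Res L)_{F'}\cong\RRes{FF'}{F'}\PP^{1}_{FF'}$ in Case~2 are fine, so the ``easy case'' of your argument is sound; it is only the complementary case that is unproved, and that case cannot be patched without an input of the strength of the toric result the paper actually uses.
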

\begin{proof}
    As all the lines in $S$ are defined over $F$, we may contract any three non-intersecting lines $L_1,L_2$ and
    $L_3$ to obtain a morphism $\pi:S \to Y$ defined over $F$, where $Y$ is a split del Pezzo surface of degree six.
    Let $Y'=\Res Y$ and let  $\pi':S' \to Y'$ be the induced map. Note that since $Y$ is toric by \cite[Thm.~30.3.1]{Man86}, we deduce that $Y'$ is 			
	also toric under the torus $T=\Res \Gm^2$. Choose an adelic metric on $\omega_{Y'_{F'}}^{-1}$.

    It follows from \cite[Prop.~V. 3.3]{Har77} that $K_S -\pi^*(K_{Y})  \sim  L_1 + L_2 + L_3 \geq 0$, where $K_S$ denotes a canonical divisor of $S$.
    Hence, we see from Lemma~\ref{lem:line_bundles}  that $K_{S'} -\pi'^*(K_{Y'})$ is also linearly equivalent to an effective divisor.
    If we choose an open subset $U' \subset U$ such that $\pi'(U') \subset T$ and such that $U'$ does not intersect the base locus of $K_{S'} -\pi'^*(K_{Y'})$,
    it follows from \cite[Thm.~B.3.2(e)]{HS00} that there exists a constant $C>0$ such that 

	\begin{equation} \label{eqn:split_cubic}
		N(\omega_{S'_{F'}}^{-1},U,B) \geq N(\omega_{S'_{F'}}^{-1},U',B) \geq N(\omega_{Y'_{F'}}^{-1},\pi'(U'),CB).
	\end{equation}

	Thus we have managed to reduce the counting problem to one on $Y'_{F'}$.
	As $Y'_{F'}$ is toric, by the main theorem of \cite{BT98} we see that there exists a constant $c_0>0$ such that
	$$N(\omega_{Y'_{F'}}^{-1},T,B) = c_0B(\log B)^{\rho(Y'_{F'})-1}(1+o(1)),$$
    as $B \to \infty$, where $\rho(Y'_{F'})= \rank \Pic Y'_{F'}$.
    Moreover in \cite{CLT10b}, Chambert-Loir and Tschinkel proved this asymptotic formula with respect to \emph{all} choices of adelic metric 
    on the anticanonical bundle, in particular the rational points on $T$ are equidistributed with respect to the associated
    Tamagawa measure, in the sense defined by Peyre (see \cite[Sec. 3]{Pey95}).
    It therefore follows from \cite[Prop.~3.3]{Pey95} that we also have the asymptotic formula
	$$N(\omega_{Y'_{F'}}^{-1},\pi'(U'),CB) = c_0CB(\log B)^{\rho(Y'_{F'})-1}(1+o(1)),$$
    as $B \to \infty$, since $T(\Adele_{F'})$ and $\pi'(U')(\Adele_{F'})$ have equal Tamagawa measures
	(the complement of $\pi'(U')$ in $T$ being a proper closed subvariety).
	Therefore, to finish the proof the lemma is suffices to compute $\rho(Y'_{F'})$.

    As $Y$ is a split del Pezzo surface of degree six, we have $\Pic \overline{Y} \cong \ZZ^4$ with trivial Galois action.
    Therefore by Lemma \ref{lem:Picard}, we know that $\Pic \overline{Y}'$ as a Galois module is the induced representation
    of $\ZZ^4$ with respect to the field extension $E \subset F$. In particular we have
    $\Pic \overline{Y}' \cong \ZZ^4 \oplus \ZZ^4$, with an element $\sigma \in G_{E}$ having non-trivial action
    (given by swapping the two factors of $\ZZ^4$) if and only if it has non-trivial image under the map $G_E \to  \Gal(F/E)$.
    Hence if $F \subset F'$, then $G_{F'}$ acts
    trivially and we have $\rho(Y'_{F'}) = 8$ as $\Pic Y'_{F'} \cong (\Pic \overline{Y}')^{G_{F'}}$ by
    Lemma \ref{lem:CMPic}. Otherwise $G_{F'}$ acts non-trivially and so $\rho(Y'_{F'}) = 4$. This completes the proof of the lemma.
\end{proof}
Theorem \ref{thm:counterexample} is a consequence of the following lemma.

\begin{lemma}
    Let $E$ be any number field and let $X$ be given by (\ref{eqn:BT}).
    Let $F=E(\sqrt{-3})$ and put $X'=\Res X_F$.
    Let $E \subset F'$ be a finite field extension and choose an adelic metric on $\omega^{-1}_{X'_{F'}}$. Let
    $U \subset X'_{F'}$ be an open subset and let $B>0$. Then there exists a constant $c'>0$
    such that
	$$N(\omega_{X'_{F'}}^{-1},U,B) \geq c'B(\log B)^{\rho(X'_{F'})+3},$$
    if $F \subset F'$ and
    $$N(\omega_{X'_{F'}}^{-1},U,B) \geq c'B(\log B)^{\rho(X'_{F'})+1},$$
    otherwise,  where $\rho(X'_{F'}) = \rank \Pic X'_{F'}$
\end{lemma}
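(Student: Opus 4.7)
Since $X$ is a smooth hypersurface of bidegree $(1,3)$ and dimension $5$ in $\PP^3 \times \PP^3$, the Lefschetz hyperplane theorem gives $\Pic \overline{X} \cong \ZZ^2$ with trivial Galois action. Lemma~\ref{lem:Picard} together with the induced representation description of $\Pic \overline{X'}$ then yields $\rho(X'_{F'}) = 4$ if $F \subset F'$ and $\rho(X'_{F'}) = 2$ otherwise, so it suffices to establish the lower bounds $B(\log B)^{7}$ and $B(\log B)^{3}$ respectively. The plan is to exhibit a closed $4$-dimensional subvariety of $X'_{F'}$, of the form $(\Res S)_{F'}$ for a split cubic surface $S$ over $F$, meeting $U$ in a non-empty open subset, and then to apply Lemma~\ref{lem:split_cubic}.

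Let $\pi\colon X \to \PP^3$ be the projection onto the first factor. For any $y = (y_0 : \cdots : y_3) \in \PP^3(F)$ whose coordinates are all non-zero cubes in $F$, writing $y_i = t_i^3$ and substituting $w_i = t_i z_i$ transforms the fiber $S_y = \pi^{-1}(y) = \{\sum_i y_i z_i^3 = 0\} \subset X_F$ into the Fermat cubic, whose $27$ lines are defined over $\QQ(\sqrt{-3}) \subset F$; hence $S_y$ is a smooth split cubic surface over $F$. By functoriality of the Weil restriction, $S'_y := \Res S_y \hookrightarrow X'$ is a closed subvariety, so $(S'_y)_{F'} \subset X'_{F'}$ is a closed $4$-dimensional subvariety. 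By adjunction $\omega_{X_F}^{-1}|_{S_y} \cong \omega_{S_y}^{-1}$ (both equal $\mathcal{O}_{S_y}(1)$), so Lemma~\ref{lem:line_bundles}(\ref{item:canonical}) gives $\omega_{X'_{F'}}^{-1}|_{(S'_y)_{F'}} \cong \omega_{(S'_y)_{F'}}^{-1}$; the given adelic metric on $\omega_{X'_{F'}}^{-1}$ therefore restricts to an adelic metric on $\omega_{(S'_y)_{F'}}^{-1}$ inducing the same heights on $(S'_y)_{F'}(F')$.

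Next I would show that for some such cube $y$, the intersection $(S'_y)_{F'} \cap U$ is non-empty, and hence open in $(S'_y)_{F'}$. The map $\pi$ induces a dominant morphism $\pi' \colon X'_{F'} \to Y'_{F'}$ where $Y' := \Res \PP^3_F$ has dimension $6$, and the fiber of $\pi'$ over the point of $Y'(F')$ corresponding to $y \in \PP^3(F)$ is exactly $(S'_y)_{F'}$, of dimension $4$. As $C := X'_{F'} \setminus U$ is a proper closed subvariety of the irreducible $10$-dimensional variety $X'_{F'}$, either $\pi'(C)$ is not Zariski dense in $Y'_{F'}$, or the generic fibre of $C \to \overline{\pi'(C)}$ has dimension at most $9-6=3 < 4$. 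In either case, the locus of $y \in Y'_{F'}$ with $(\pi')^{-1}(y) \subset C$ is contained in a proper closed subvariety of $Y'_{F'}$. Since cubes are Zariski dense in $F^*$, the set of $y \in \PP^3(F)$ with all coordinates non-zero cubes is Zariski dense in $\PP^3_F$, and hence in $Y'_{F'}$ via the Weil restriction identification $Y'(E) = \PP^3(F)$; so we may find such a $y$ outside this locus.

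Applying Lemma~\ref{lem:split_cubic} to $S_y$ over $F$, the given extension $E \subset F'$, the restricted adelic metric on $\omega_{(S'_y)_{F'}}^{-1}$, and the non-empty open subset $(S'_y)_{F'} \cap U$ yields $N \gg B(\log B)^7$ if $F \subset F'$ and $N \gg B(\log B)^3$ otherwise. Since the heights coincide and $(S'_y)_{F'} \cap U \subset U$, this gives the required lower bound for $N(\omega_{X'_{F'}}^{-1}, U, B)$. The main obstacle is the dimension-counting argument of the preceding paragraph: even when $\pi'(C)$ is Zariski dense in $Y'_{F'}$, the dimension bound $\dim C \leq 9$ must be exploited to guarantee that the generic fibre of $\pi'$ is not entirely contained in $C$, so that most cube-fibres avoid $C$.
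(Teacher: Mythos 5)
Your proposal follows the same route as the paper: project $X$ to the first $\PP^3$, note that the fibres over points all of whose coordinates are non-zero cubes become the Fermat cubic after the substitution $z_i\mapsto t_iz_i$ and hence are split over $F\supset\QQ(\sqrt{-3})$, pass to the Weil restrictions of these fibres inside $X'=\Res X_F$, and conclude by Lemma~\ref{lem:split_cubic} together with the computation of $\rho(X'_{F'})$ from the induced-module structure of $\Pic\overline{X'}$ (Lemma~\ref{lem:Picard}). The restriction of the anticanonical metric to the fibres and the count of $\rho(X'_{F'})$ are handled correctly.

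The one step whose justification fails as written is the density claim: you assert that the set of $y\in\PP^3(F)$ with all coordinates non-zero cubes is Zariski dense in $\PP^3_F$ ``and hence'' the corresponding set of $E$-points is Zariski dense in $Y'=\Res\PP^3$. That implication is false in general: Zariski density of a set of $F$-points in $\PP^3_F$ does not entail Zariski density of the corresponding points of $\Res\PP^3$. For instance $\PP^3(E)\subset\PP^3(F)$ is dense in $\PP^3_F$, but under the identification $\Res\PP^3(E)=\PP^3(F)$ it corresponds to the $E$-points of the diagonal copy $\PP^3_E\hookrightarrow\Res\PP^3$ coming from the unit of the adjunction, a $3$-dimensional closed subvariety of the $6$-dimensional $\Res\PP^3$. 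The repair is exactly the paper's argument: the subset of $\Res\PP^3(E)$ corresponding to $\varphi(\PP^3(F))$, where $\varphi$ is the coordinatewise cubing map, equals $\varphi'(\Res\PP^3(E))$ with $\varphi'=\Res\varphi$, by the commutativity of the square relating $\varphi$, $\varphi'$ and the counit $p$; since $\varphi'$ is a dominant morphism defined over $E$ and $\Res\PP^3(E)$ is Zariski dense in the rational variety $\Res\PP^3$, the image is Zariski dense. With that substitution your argument that some such fibre meets $U$ (your dimension count, or more simply the fact that $\pi'(U)$ contains a non-empty open subset of $\Res\PP^3$ and therefore meets any dense set of points) goes through, and the rest of the proof is correct.
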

\begin{proof}
	We begin by noting that as $\Pic \Xbar \cong \ZZ^2$
	with trivial Galois action, it follows as in the proof of Lemma \ref{lem:split_cubic}
	that $\rho(X'_{F'}) = 4$ if $F \subset F'$ and that $\rho(X'_{F'}) = 2$ otherwise.
	Next, consider the projection map
    \begin{align*}
        &\pi:X \to \PP^3, \qquad (x,y) \mapsto x.
    \end{align*}
    The fibres over those points with $x_0\cdots x_3 \neq 0$ are smooth diagonal cubic surfaces,
    and moreover the anticanonical bundle of these surfaces is isomorphic to the restriction of the anticanonical
    bundle on $X$. If we let
    \begin{align*}
        &\varphi:\PP^3 \to \PP^3 , \qquad (x_0:x_1:x_2:x_3) \mapsto (x_0^3:x_1^3:x_2^3:x_3^3),
    \end{align*}
    then the set $\varphi(\PP^3(F))$ is Zariski dense in $\PP^3(F)$. Moreover,
    as in the proof of \cite[Thm.~3.1]{BT96}, we see that since
    $\QQ(\sqrt{-3}) \subset F$, the fibres over those points in $\varphi(\PP^3(F))$
    with $x_0\cdots x_3 \neq 0$ are \emph{split} cubic surfaces, i.e. a Zariski dense
    set of the fibres of $\pi$  are split cubic surfaces. We want an analogous statement
    for the corresponding map $\pi'=\Res \pi: X' \to \Res \PP^3$.

    Let $\varphi'= \Res \varphi: \Res \PP^3 \to \Res \PP^3$ be the map induced by $\varphi$
    and let $p:\Res \PP^3 \to \PP^3$ be the usual universal morphism. We
    have the following commutative diagram
    \begin{equation}\label{diag:commute}
	  \begin{split}
	    \xymatrix{
               \Res \PP^3 \ar[d]^p \ar[r]^{\varphi'} & \Res \PP^3 \ar[d]^p \\
	     \PP^3 \ar[r]^{\varphi} & \PP^3 }
  	\end{split}
   \end{equation}
    Note that the fibre of $\pi'$ over a point $ x \in \Res \PP^3(E)$ is the Weil restriction of the 
    fibre of $\pi$ over the point $p(x) \in \PP^3(F)$. Also, we claim that
    $p^{-1}(\varphi(\PP^3(F)))$ is Zariski dense in $\Res \PP^3(E)$. Indeed, by the commutivity of
    (\ref{diag:commute}) we have the equality
    $p^{-1}(\varphi(\PP^3(F)))=\varphi'(\Res \PP^3(E))$.
    This later set is Zariski dense in $\Res \PP^3$ as $\varphi'$ is dominant.
    In particular, we see that there is a Zariski dense set of points in $\Res \PP^3$ whose fibres with respect to 				
    $\pi'$ are Weil restrictions of split cubic surfaces. 	
    The result therefore follows on combining Lemma \ref{lem:split_cubic} with the above calculation of $\rho(X'_{F'})$.
\end{proof}

We finish by remarking that by applying the same method to the varieties
$$
	X_{n+2}: x_0y_0^3 + x_1y_1^3 + x_2y_2^3 + x_3y_3^3=0 \subset \PP^{n+2} \times \PP^3 ,
$$
considered in \cite{BT96} for any $n>1$, one may construct counterexamples to
Manin's conjecture with arbitrary large dimension over any number field (note that the variables
$x_4,\ldots,x_{n+2}$ do not appear in this equation).

\end{document}